\newtheorem{dummy}{Dummy}
\newtheorem{lemma}[dummy]{Lemma}
\newtheorem{theorem}[dummy]{Theorem}
\newtheorem{proposition}[dummy]{Proposition}
\newtheorem{corollary}[dummy]{Corollary}
\theoremstyle{definition}
\newtheorem{definition}{Definition}
\newtheorem{example}[dummy]{Example}
\newtheorem{remark}[dummy]{Remark}
\newcommand{\tl}{\widetilde{\tau}}
\newcommand{\ignore}[1]{}
\date{14.12.2015}
\author{S. Pumpl\"un}
\email{susanne.pumpluen@nottingham.ac.uk}
\address{School of Mathematical Sciences\\
University of Nottingham\\
University Park\\
Nottingham NG7 2RD\\
United Kingdom
}
\keywords{cyclic algebra, nonassociative cyclic algebra, nonassociative quaternion algebra, tensor product, division algebra.}
\subjclass[2010]{Primary: 17A35; Secondary: 16S36, 94B05}
\begin{document}

\title[Tensor products of nonassociative cyclic algebras]{Tensor products of nonassociative cyclic algebras}

\maketitle

\begin{abstract}
We study the tensor product of an associative and a nonassociative
 cyclic algebra. The condition for the tensor product
 to be a division algebra equals the classical one for the tensor product of
 two associative cyclic algebras by Albert or Jacobson, if the base field contains a suitable root of unity.
 Stronger conditions are obtained in special cases.
 Applications to space-time block coding are discussed.
\end{abstract}

%
%

\section*{Introduction}

Nonassociative cyclic algebras of degree $n$ are  canonical generalizations of associative cyclic algebras of degree $n$ and
 were first introduced over finite fields by Sandler \cite{S62}.
 Nonassociative quaternion algebras (the case $n=2$)
constituted  the first known examples of a nonassociative division algebra (Dickson \cite{D}).
Nonassociative cyclic algebras were investigated over arbitrary fields by
 Steele \cite{S12}, \cite{S13}, see also \cite{PS13.2}.

 In the following we study the tensor product
 $A=D_0\otimes_{F_0} D_1$ of an associative and a nonassociative
 cyclic algebra over a field $F_0$ and give conditions for $A$ to be a division algebra.
 We discover that   these tensor product algebras are
 used in space-time block coding \cite{P13}, \cite{PS13}, \cite{PS14},
and employed to construct some of the iterated codes by Markin and Oggier \cite{MO13}.

After recalling results needed in the paper in Section 1,
 results by Petit \cite{P66} are used to
show that the iterated algebras ${\rm It}_R^m(D,\tau,d)$ introduced in \cite{P13}, \cite{PS13}, \cite{PS14} can be defined using polynomials in skew-polynomial rings over $D$
when $D$ is a cyclic division algebra (Theorem \ref{thm:skew})  in Section 2.
In Section 3 we show that for
   an associative division algebra $D=(L/F_0,\sigma,c)\otimes_{F_0}F$, $L$ and $F$ linearly disjoint over $F_0$,
$$(L/F_0,\sigma,c)\otimes_{F_0} (F/F_0,\tau,d)\cong S_f\cong {\rm It}_R^m(D,\tau,d),$$
where the twisted polynomial $f(t)=t^m-d\in D[t;\widetilde{\tau}^{-1}]$, $\widetilde{\tau}$ an automorphism of
$D$ canonically extending $\tau$,
is used to construct the algebra $S_f$  (Theorem \ref{main}).
Section 3 contains the main results:
if $D_0$ is an associative cyclic algebra over $F_0$ such that $D=D_0\otimes_{F_0}F$
is a division algebra, and $D_1=(F/F_0,\tau,d)$  a nonassociative cyclic algebra of degree $m$,
$L$ and $F$ linearly disjoint over $F_0$, then
$D_0\otimes_{F_0} D_1$
is a division algebra if and only if
$f(t)=t^m-d$ is irreducible in $D[t;\widetilde{\tau}^{-1}]$
 (Theorem \ref{thm:classicalnew}).

  Assume further that $m$ is prime and if $m\not=2,3$, that $F_0$ contains a primitive $m$th root of unity. Then
$(L/F_0,\sigma,c)\otimes_{F_0} (F/F_0,\tau,d)$ is a division algebra if and only if
$$d\not=z\widetilde{\tau}(z)\cdots \widetilde{\tau}^{m-1}(z)$$ for all $z\in D$.
 This generalizes the classical condition
 for the tensor product of two associative cyclic algebras \cite[Theorem 1.9.8]{J96},
 (see also Theorem \ref{associativetensor}), to the nonassociative setting.
 Some more detailed conditions are obtained for special cases.
For instance, if ${\rm char}\,F_0\not=2$, $D_0$ is an associative quaternion algebra
over $F_0$ which remains a division
algebra over $F$,
 and $D_1$ a nonassociative quaternion algebra, such that $D_0$ and $D_1$ do not share a common subfield,
 then $D_0\otimes_{F_0} D_1$
 is always a division algebra over $F_0$ (Theorem \ref{biquat}).
In Section 4, we discuss how our results can be applied to systematically construct fully diverse
fast-decodable space-time block codes for $nm$ transmit and $m$ receive antennas.
We thus generalize the set-up discussed in \cite[Section V.A]{MO13} which is limited to tensor products
of a cyclic algebra of degree three and a nonassociative
quaternion algebra, i.e. to $n=3$, $m=2$, and see that Theorems \ref{thm:classicalnew},
\ref{biquat} and \ref{3xquat} provide  conditions for an iterated code
consisting of larger matrices (where $m>2$) to be fully diverse.
We then design a new family of fast-decodable fully diverse $4\times 2$-codes
with  non-vanishing determinant starting with the Silver code,
which have decoding complexity $O(M^{4.5})$, i.e. the  same decoding complexity  as all state-of the art
fast-decodable fully diverse $4\times 2$ codes.
Thus our construction allows us to systematically design codes whose decoding complexity
is competitive with the
ones designed `ad hoc' like the SR-code \cite[IV.B]{R13} or the code in \cite{LHHC}
 and which also have non-vanishing determinant.

\section{Preliminaries}

\subsection{Nonassociative algebras}

Let $F$ be a field and let $A$ be an $F$-vector space.
We call $A$ an \emph{algebra} over $F$ if there exists an
$F$-bilinear map $A\times A\to A$, $(x,y) \mapsto x \cdot y$, denoted simply by juxtaposition $xy$,
the  \emph{multiplication} of $A$.
An algebra $A$ is called \emph{unital} if there is
an element in $A$, denoted by 1, such that $1x=x1=x$ for all $x\in A$.
We will only consider unital algebras.

   An algebra $A\not=0$ is called a {\it division algebra} if for any $a\in A$, $a\not=0$,
the left multiplication  with $a$, $L_a(x)=ax$,  and the right multiplication with $a$, $R_a(x)=xa$, are bijective.
If $A$ is finite-dimensional, $A$ is a division algebra if and only if $A$ has no zero divisors \cite[pp. 15, 16]{Sch}.

For an $F$-algebra $A$, associativity in $A$ is measured by the {\it associator} $[x, y, z] = (xy) z - x (yz)$.
The {\it left nucleus} of $A$ is defined as ${\rm Nuc}_l(A) = \{ x \in A \, \vert \, [x, A, A]  = 0 \}$, the
{\it middle nucleus}  is
 ${\rm Nuc}_m(A) = \{ x \in A \, \vert \, [A, x, A]  = 0 \}$,  the
{\it right nucleus}  is  ${\rm Nuc}_r(A) = \{ x \in A \, \vert \, [A,A, x]  = 0 \}$ and the
{\it  nucleus}  is
 ${\rm Nuc}(A) = \{ x \in A \, \vert \, [x, A, A] = [A, x, A] = [A,A, x] = 0 \}$.
 It is an associative
subalgebra of $A$ containing $F1$ and $x(yz) = (xy) z$ whenever one of the elements $x, y, z$ is in
${\rm Nuc}(A)$. The {\it commuter} of $A$ is defined as ${\rm Comm}(A)=\{x\in A\,|\,xy=yx \text{ for all }y\in A\}$
and the {\it center} of $A$ is ${\rm C}(A)=\{x\in A\,|\, x\in \text{Nuc}(A) \text{ and }xy=yx \text{ for all }y\in A\}$.
For two nonassociative algebras  $C$ and $D$ over $F$,
$${\rm Nuc}(C)\otimes_F {\rm Nuc}(D)\subset {\rm Nuc}(C \otimes_F D).$$
Thus we can consider the tensor product $A=C \otimes_F D$ as a right $R$-module over
any ring $R\subset {\rm Nuc}(C)\otimes_F {\rm Nuc}(D)$.

\subsection{Associative and nonassociative cyclic algebras}

Let $K/F$ be a cyclic Galois extension of degree $n$ with Galois group ${\rm Gal}(K/F)=\langle \sigma \rangle$.

An associative cyclic algebra $(K/F,\sigma,c)$ of degree $n$ over $F$, $c\in F^\times$, is an $n$-dimensional $K$-vector space
\[
(K/F,\sigma,c)=K \oplus eK \oplus e^2 K\oplus\dots \oplus e^{n-1}K,
\]
with multiplication given by the relations
$$\label{eq:rule}
e^n=c,~le=e\sigma(l),
$$
for all $l\in K$.  $(K/F,\sigma,c)$ is division
 for all $c\in F^\times$, such that
  $c^s\not\in N_{K/F}(K^\times)$ for all $s$ which are prime divisors of $n$, $1\leq s\leq n-1$.

For $c\in K\backslash F$, we define a unital nonassociative algebra  $(K/F,\sigma,c)$ (Sandler \cite{S62})
 as the $n$-dimensional $K$-vector space
\[
(K/F,\sigma,c)=K \oplus eK \oplus e^2K \oplus \dots\oplus e^{n-1}K,
\]
 where multiplication is given by the following rules
for all $a,b\in K, 0 \leq i,j, <n$, which then are extended linearly to all elements of $A$:
\[
 (e^ia)(e^jb) =
  \begin{cases}
  e^{i+j}  \sigma^j(a)b  & \text{if } i+j < n, \\
   e^{(i+j)-n}  d \sigma^j(a)b & \text{if } i+j \geq n.
  \end{cases}
\]
We call $D=(K/F,\sigma,c)$ with $c\in K\setminus F$  a  \emph{nonassociative cyclic algebra of degree} $n$.
$D$ has nucleus  $K$ and center  $F$.
$D$ is not $(n+1)$th power associative since $(e^{n-1}e)e = e\sigma(a)$ and $e(e^{n-1}e) = ea $.

 If $[K:F]$ is prime, $D$ always
is a division algebra. If $[K:F]$ is not prime, $D$ is a division algebra for any choice of $c$ such that
$1,c,\dots,c^{n-1}$ are linearly independent over $F$ \cite{S13}.

For $n=2$, $(K/F,\sigma,c)={\rm Cay}(K,c)$ is an associative (if $c\in F$) or nonassociative
(if $c\in K\setminus F$)
quaternion algebra over $F$, cf. {\cite{As-Pu}}, \cite{PU11} or {\cite{W}}.

From now on, when we say $D=(K/F, \sigma, c)$ is a cyclic algebra, we mean an associative or
nonassociative cyclic algebra over $F$ without always explicitly stating that we also allow $c\in K^\times$.
We call $\{1,e,e^2,\dots,e^{n-1}\}$ the {\it standard basis} of $(K/F,\sigma,c)$.

$D=(K/F,\sigma,c)$
 is a $K$-vector space of dimension $n$  (since $K= {\rm Nuc}(D)$ if the algebra is nonassociative) and,
after a choice of a $K$-basis, we can embed the $K$-vector space ${\rm End}_K(D)$ into
${\rm Mat}_n(K)$.
The left multiplication of elements of $D$ with $y=y_0+ey_1+\dots+e^{n-1}y_{n-1}\in D$
($y_i\in K$) induces the
$K$-linear embedding $\lambda:D\rightarrow {\rm Mat}_{n}(K)$.

%
%

\subsection{Iterated algebras}

From now on, we will use the following notation:
Let $F$ and $L$ be fields and let $K$ be a cyclic field extension of both $F$ and $L$ such that
\begin{enumerate}
\item $Gal(K/F) = \langle \sigma \rangle$ and $[K:F] = n$,
\item $Gal(K/L) = \langle \tau \rangle$ and $[K:L] = m$,
\item $\sigma$ and $\tau$ commute: $\sigma \tau = \tau \sigma$.
\end{enumerate}
Define $F_0=F\cap L$.
Let $D=(K/F, \sigma, c)$ be a cyclic  algebra of degree $n$ over $F$ with reduced norm $N_{D/F}$.
 For $x= x_0 + ex_1 + e^2x_2 +\dots + e^{n-1}x_{n-1}\in D$ ($x_i\in K$, $1\leq i\leq n$), define the $L$-linear map
 $\widetilde{\tau}:D\to D$  via
 $$\widetilde{\tau}(x)=\tau(x_0) + e \tau(x_1) + e^2\tau(x_2) +\dots + e^{n-1}\tau(x_{n-1}).$$
 If $c\in F_0$ then
 $$ \tl(xy) = \tl(x)\tl(y) \text{ and }\lambda (\tl(x)) = \tau (\lambda(x))$$
 for all $x,y \in D$,
where for any matrix $X=\lambda(x)$ representing left multiplication with $x$, $\tau(X)$
means applying $\tau$ to each entry of the matrix.

\begin{definition} (cf. \cite{PS13}, \cite{PS14})
Pick $d \in F^\times$, $c\in F_0$. Define a multiplication on the right $D$-module
\[ D \oplus fD \oplus f^2D \oplus \cdots \oplus f^{m-1}D,\]
via the rules
\[
 (f^i x)(f^j y) =
  \begin{cases}
   f^{i+j} \tl^j(x)y & \text{if } i+j < m \\
   f^{(i+j)-m} \tl^j(x)yd  & \text{if } i+j \geq m
  \end{cases}
\]
for all $x, y \in D$, and call the resulting  {\it iterated algebra} ${\rm It}_R^m(D, \tau, d) $.
\end{definition}

 ${\rm It}_R^m(D,\tau,d)$ is a nonassociative algebra over  $F_0$ of dimension $m^2n^2$
 with unit element $(1_D,0,\dots,0)$ and contains $D$ as a subalgebra.
 We call
$$\{1,e,e^2,\dots,e^{n-1},f,fe,fe^2,\dots,f^{m-1}e^{n-1}\}$$
 the {\it standard basis} of the  $K$-vector space ${\rm It}_R^m(D,\tau,d)$.

\begin{example}
Let $A={\rm It}_R^3(D,\tau,d)$ and put
 $f=(0,1_D,0)$. Then $f^2=(0,0,1_D)$, $f^2f=(d,0,0)=ff^2$ and the multiplication in $A$ is given by
\[(u,v,w)(u',v',w')
=(
\begin{bmatrix}
u & d\widetilde{\tau}(w) & d\widetilde{\tau}^2(v)  \\
v & \widetilde{\tau}(u) & d\widetilde{\tau}^2(w)  \\
w & \widetilde{\tau}(v) & \widetilde{\tau}^2(u)  \\
\end{bmatrix}
\left [\begin {array}{c}
u'  \\
v'  \\
w'
\end {array}\right ])^T\]
$$
=(uu'+d\widetilde{\tau}(w)v'+d\widetilde{\tau}^2(v)w',
                     vu'+\widetilde{\tau}(u)v'+d\widetilde{\tau}^2(w)w',
                     wu'+\widetilde{\tau}(v)v'+\widetilde{\tau}^2(u)w')$$
for $u,v,w,u',v',w'\in D$.

\end{example}

From now on, let $$A={\rm It}_R^m(D,\tau,d).$$

\begin{lemma}\label{lem:lem5}
 (i) The cyclic algebra $(K/L,\tau,d)$ over $L$, viewed as an algebra over $F_0$,
 is a subalgebra of $A$, and is nonassociative if $d\in F\setminus F_0$.
 \\ (ii) Let $m$ be even.
Then  $It_R^2(D, \tau, d)$ is isomorphic to a subalgebra of $A$.

 In particular, the quaternion algebra
$(K/L,\tau,d)={\rm Cay}(K,d)$ over $L$, viewed as algebra over $F_0$, is a subalgebra of ${\rm It}_R^2(D,\tau,d)$,
which is nonassociative and division if $d\in F\setminus F_0$.
\end{lemma}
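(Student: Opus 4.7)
The plan is to exhibit explicit $F_0$-subspaces of $A = {\rm It}_R^m(D,\tau,d)$ in each case and verify closure directly from the defining multiplication rule; once closure is checked, identifying the resulting structure with the claimed algebra is essentially automatic, using the formulas for $f^m$ and $kf$ read off from the same rule.

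For (i), I would take
\[ B := K \oplus fK \oplus f^2K \oplus \cdots \oplus f^{m-1}K \subseteq A. \]
Two observations suffice for closure: $\widetilde{\tau}$ restricts to $\tau$ on $K \subset D$ (since the extra $e^i$-components of an element of $K \subset D$ are zero), and $d \in F \subseteq K$. A product $(f^ik)(f^jk')$ therefore evaluates to either $f^{i+j}\tau^j(k)k'$ or $f^{i+j-m}\tau^j(k)k'd$, both clearly in $B$. Reading off $f^m = d$ and $kf = f\tau(k)$ from the same rule shows that the map $f \mapsto e$, $k \mapsto k$ is an $F_0$-algebra isomorphism $B \xrightarrow{\sim} (K/L,\tau,d)$, matching the defining relations of the cyclic algebra. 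Since $F_0 = F \cap L$ and $d \in F$, the equivalence $d \in F\setminus F_0 \iff d \in K\setminus L$ holds, so under that hypothesis $(K/L,\tau,d)$ is nonassociative by the Sandler construction recalled in Section~1.

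For (ii), since $m$ is even, set $g := f^{m/2}$ and take
\[ C := D \oplus gD \subseteq A. \]
The four products $(f^i x)(f^j y)$ with $i,j \in \{0, m/2\}$ yield $xy$, $gxy$, $g\widetilde{\tau}^{m/2}(x) y$, and---in the case $i = j = m/2$, where the $f^m = d$ relation kicks in---$\widetilde{\tau}^{m/2}(x) y d$, all of which lie in $C$. Comparing the resulting multiplication with the defining rule for ${\rm It}_R^2$ under the identification $g \leftrightarrow f$ gives the claimed isomorphism $C \cong {\rm It}_R^2(D,\tau,d)$, the automorphism of $D$ driving the two-step iteration being $\widetilde{\tau}^{m/2}$. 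The ``in particular'' assertion is then immediate: applying (i) with iteration parameter $2$ to the subalgebra $C$ exhibits the quaternion algebra $(K/L,\tau,d) = {\rm Cay}(K,d)$ inside $C$. Nonassociativity for $d \in F \setminus F_0$ follows as in (i), and the division-algebra property is immediate from the fact recalled in Section~1 that a nonassociative cyclic algebra of prime degree (here $2$) is always division.

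The only real obstacle is the bookkeeping in (ii)---keeping straight that the ``effective'' automorphism of $D$ for the two-step iteration inside $A$ is $\widetilde{\tau}^{m/2}$ rather than $\widetilde{\tau}$. Everything else is a direct evaluation of the given multiplication rule.
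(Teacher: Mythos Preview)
Your approach is exactly the paper's: for (i) you restrict the multiplication of $A$ to $K\oplus fK\oplus\cdots\oplus f^{m-1}K$, and for (ii) you exhibit $D\oplus f^{m/2}D$ as a subalgebra, just as the paper does (writing $m=2s$ and taking $D\oplus f^sD$). Your explicit closure checks and the identification $d\in F\setminus F_0\Leftrightarrow d\in K\setminus L$ simply fill in details the paper leaves implicit; your remark that the effective automorphism on $D\oplus f^sD$ is $\widetilde{\tau}^{\,s}$ rather than $\widetilde{\tau}$ is a correct observation of a notational imprecision that the paper's own one-line proof glosses over.
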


\begin{proof}
(i) This can be seen by restricting the multiplication of $A$ to $K\oplus \dots\oplus K$.
\\ (ii)
 Suppose that $m = 2s$ for some integer $s$. Then  ${\rm It}_R^2(D, \tau, d)$
is isomorphic to $D \oplus f^sD$, which is a subalgebra of $A$
 under the multiplication inherited from $A$.
\end{proof}

We can embed ${\rm End}_K(A)$ into the module
${\rm Mat}_{nm}(K)$.
Left multiplication
$L_x$ with $x\in A$  is a right $K$-endomorphism, so that we obtain  a well-defined additive map
$$L:A\to {\rm End}_K(A)\hookrightarrow {\rm Mat}_{nm}(K),\quad x\mapsto L_x \mapsto L(x)=X$$
which is injective if $A$ is division.

Take the standard basis
$\{1, e, \ldots, e^{n-1}, f, fe, \ldots, f^{m-1}e^{n-1}\}$
 of the  $K$-vector space $A$.
Then
\[xy = (\lambda(M(x)) y^T)^T,\]
where
\begin{equation} \label{equ:main}
\lambda(M(x)) = \left[ \begin{array}{cccc}
\lambda(x_0) & d\tau(\lambda(x_{m-1}))&  \cdots & d \tau^{m-1}(\lambda(x_1)) \\
\lambda(x_1) & \tau(\lambda(x_0)) & \cdots & d \tau^{m-1}(\lambda(x_{2})) \\
\vdots & \vdots  & \ddots & \vdots \\
\lambda(x_{m-1}) & \tau(\lambda(x_{m-2})) & \cdots & \tau^{m-1}(\lambda(x_0)) \end{array} \right]
\end{equation}
 is obtained by taking the matrix $\lambda(x_i)$, $x_i \in D$, representing left multiplication in $D$ of each entry in the
 matrix $M(x)$.

$\lambda(M(x))$ represents the left multiplication by the element $x$ in $A$. Define
 $$M_{A}:A\to K, \quad M_{A}(x)=\det(\lambda(M(x))).$$
 If $x \in A$ is nonzero and not a left zero divisor in $A$, then $M_A(x)\not=0$ by \cite[Theorem 9]{PS14}.

\begin{remark}  It is clear that $A$ is a division algebra if and only if $M_A(x)\not=0$ for all $x\not=0$:
 If $A$ is a division algebra then $L_x$ is bijective for all $x\not=0$ and thus $\lambda(M(x))$ invertible, i.e.
$M_A(x)\not=0$. Conversely, if $M_A(x)\not=0$ for all $x\not=0$ then for all $x,y\in A$, $x\not=0$,
$y\not=0$, also $xy=(\lambda(M(x)) y^T)^T\not=0$.

\end{remark}

\section{Division algebras obtained from skew-polynomial rings}

In the following, we recall results from \cite{J96} and \cite{P66}.
Let $D$ be a unital division ring and $\sigma$ a ring isomorphism of $D$. The \emph{twisted polynomial ring} $D[t;\sigma]$
is the set of polynomials
$$a_0+a_1t+\dots +a_nt^n$$
with $a_i\in D$, where addition is defined term-wise and multiplication by
$$ta=\sigma(a)t \quad (a\in D).$$
That means,
$$at^nbt^m= a \sigma^{n}(b)t^{n+m} \text{ and } t^na=\sigma^n(a)t^n$$
for all $a,b\in D$
\cite[p.~2]{J96}.
$R=D[t;\sigma]$ is a left principal ideal domain  and
there is a right division algorithm in $R$ \cite[p.~3]{J96}, i.e. for all $g,f\in R$, $g\not=0$, there exist unique $r,q\in R$
  such that ${\rm deg}(r)<{\rm deg}(f)$ and
$$g=qf+r.$$
 $R=D[t;\sigma]$ is also a right principal ideal domain  \cite[p.~6]{J96}
with a left division algorithm in $R$ \cite[p.~3 and Prop. 1.1.14]{J96}.
(We point out that our terminology is the one used by Petit \cite{P66} and Lavrauw and Sheekey \cite{LS};
it is different from Jacobson's \cite{J96}, who calls what we call right a left division algorithm and vice versa.)

Thus $R=D[t;\sigma]$ is a (left and right) principal ideal domain (PID).

 An element $f\in R$ is \emph{irreducible} in $R$ if  it is no unit and it has no proper factors, i.e there do not exist $g,h\in R$ with
 ${\rm deg}(g),{\rm deg} (h)<{\rm deg}(f)$ such
 that $f=gh$ \cite[p.~11]{J96}.

 \begin{definition} (cf. \cite[(7)]{P66})
 Let $f\in D[t;\sigma]$ be of degree $m$ and let ${\rm mod}_r f$ denote the remainder of right division by $f$.
 Then the vector space $R_m=\{g\in D[t;\sigma]\,|\, {\rm deg}(g)<m\}$ together with the multiplication
 $$g\circ h=gh \,\,{\rm mod}_r f $$
 becomes a unital nonassociative algebra $S_f=(R_m,\circ)$ over $F_0=\{z\in D\,|\, zh=hz \text{ for all } h\in S_f\}$.
\end{definition}

The multiplication is well-defined because of the right division algorithm
and $F_0$  is a subfield of $D$.

 Since $\sigma$ is a ring isomorphism, we can use the left division algorithm to define an algebra as well:
 Let $f\in D[t;\sigma]$ be of degree $m$ and let ${\rm mod}_l f$ denote the remainder of left division by $f$.
 Then  $R_m$ together with the multiplication
 $$g\circ h=gh \,\,{\rm mod}_l f $$
 becomes a nonassociative algebra $\,_fS=(R_m,\circ)$, which, however, turns out to be anti-isomorphic to a suitable
 algebra $S_g$ for some $g\in R'$ in some twisted polynomial ring $R'$.

\begin{remark}
(i) When ${\rm deg}(g){\rm deg}(h)<m$, the multiplication of $g$ and $h$ in $S_f$ is the same as the multiplication
of $g$ and $h$ in $R$ \cite[(10)]{P66}.
\\
 (ii)
Given a cyclic Galois field extension $K/F$ of degree $m$
with ${\rm Gal}(K/F)=\langle \sigma\rangle$, the cyclic algebra $(K/F,\sigma,d)$ is the algebra
$S_f$ with $f(t)=t^m-d\in R=K[t;\sigma^{-1}]$ \cite[p.~13-13]{P66}.
\\  (iii) Let $D$ be a finite-dimensional central division algebra over $F$ and $\sigma$ an automorphism
of $D$ of order $m$.
In \cite{J96},  the associative algebras
 $$E(f)=\{ g\in D[t;\sigma]\,|\, {\rm deg}(g)<m,\, f\text{ right divides }fg \}$$
for $f=t^m-d\in D[t;\sigma]$, were investigated. $E(f)$ is division iff $f$ is irreducible.
\end{remark}

\begin{theorem} \label{thm:petit}
(cf. \cite[(2), p.~13-03, (7) (9), (15), (17), (18), (19)]{P66})
Let $f=t^m-\sum_{i=0}^{m-1}d_it^i\in R=D[t;\sigma]$.
\\
(i) If $S_f$ is not associative then
$${\rm Nuc}_l(S_f)={\rm Nuc}_m(S_f)=D$$ and
$${\rm Nuc}_r(S_f)=\{g\in S_f\,|\, fg\in Rf\}=E(f).$$
\\ (ii) If $f$ is irreducible then ${\rm Nuc}_r(S_f)$ is an associative division algebra.
\\ (iii) Let $f\in R$ be irreducible and $S_f$ a finite-dimensional $F_0$-vector space
or a finite-dimensional right ${\rm Nuc}_r(S_f)$-module. Then $S_f$ is a division algebra.
\\ (iv) $f(t)=t^2-d_1t-d_0$ is irreducible in $D[t;\sigma]$ if and only if $\sigma(z)z-d_1z-d_0\not=0$ for all $z\in D$.
\\ (v) $f(t)=t^3-d_2t^2-d_1t-d_0$ is irreducible in $D[t;\sigma]$ if and only if
$$\sigma^2(z)\sigma(z)z-\sigma^2(z)\sigma(z)d_2-\sigma^2(z)\sigma(d_1)-\sigma^2(d_0)\not=0$$
and
$$\sigma^2(z)\sigma(z)z-d_2\sigma(z)z-d_1z-d_0\not=0$$
 for all $z\in D$.
 \\ (vi) Suppose $m$ is prime and ${\rm C}(D)\cap {\rm Fix}(\sigma)$ contains a primitive $m$th root of unity.
Then $f(t)=t^m-d$ is irreducible in $D[t;\sigma]$ if and only if
$$d\not=\sigma^{m-1}(z)\cdots\sigma(z)z
\text{ and }\sigma^{m-1}(d)\not=\sigma^{m-1}(z)\cdots\sigma(z)z$$
 for all $z\in D$.
\end{theorem}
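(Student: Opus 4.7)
The plan is to work throughout with the twisted-polynomial description of $S_f$ and the right division algorithm in the PID $R = D[t;\sigma]$.

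For (i), given $a\in D$ and $g,h\in R_m$, left multiplication by $a$ preserves degrees, so right-reduction modulo $f$ commutes with it, placing $D$ in ${\rm Nuc}_l(S_f)$ and, by the analogous computation, in ${\rm Nuc}_m(S_f)$. For the right nucleus, write $gh = qf + (g\circ h)$; a direct computation yields
\[
(g\circ h)\circ k - g\circ(h\circ k) = -(qfk)\,\,{\rm mod}_r f,
\]
which vanishes for all $g,h$ (equivalently for all $q\in R$) iff $fk\in Rf$, i.e.\ iff $k\in E(f)$. That ${\rm Nuc}_l(S_f) = {\rm Nuc}_m(S_f) = D$ when $S_f$ is nonassociative follows by exhibiting associators that fail as soon as one factor has positive $t$-degree.

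Parts (ii) and (iii) follow from the PID structure. If $f$ is irreducible and $g\in E(f)\setminus\{0\}$ has $\deg g<m$, the right-gcd of $f$ and $g$ in $R$ must be $1$, so $1 = pg + qf$ for some $p,q\in R$; reducing $p$ modulo $f$ on the right gives $g$ an inverse inside $E(f)$, hence $E(f)$ is an associative division ring. The same gcd argument shows $S_f$ has no zero divisors: $g\circ h = 0$ means $f\mid gh$ on the right with $\deg g,\deg h<m$, impossible by irreducibility. Since $S_f$ is finite-dimensional over $F_0$ or a finite right module over the division ring ${\rm Nuc}_r(S_f)$, left and right multiplication by any nonzero element are bijective and $S_f$ is division.

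For (iv)--(v), reducibility is equivalent to the existence of a proper factor, and for prime degree every proper factor is linear (as left or right factor). Expanding $(t-a)(t-z) = t^2 - (a+\sigma(z))t + az$ and matching with $t^2 - d_1 t - d_0$ yields $\sigma(z)z - d_1 z - d_0 = 0$, giving (iv). In degree three, a proper right factor $t-z$ is detected by computing the right-division remainder of $t^3 - d_2t^2 - d_1t - d_0$ by $t-z$ using $t^j \equiv \sigma^{j-1}(z)\cdots\sigma(z)z$, giving $\sigma^2(z)\sigma(z)z - d_2\sigma(z)z - d_1 z - d_0$; a proper left factor $t-z$ is detected by the analogous left-division computation and, after applying $\sigma^2$ to clear the resulting $\sigma^{-1}$'s, gives the first expression in (v).

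The main obstacle is (vi). Iterating the right-division relation, the remainder of $t^m - d$ upon right division by $t-z$ equals $\sigma^{m-1}(z)\cdots\sigma(z)z - d$, and the analogous left-division computation produces the condition on $\sigma^{m-1}(d)$; these two conditions precisely rule out linear right and linear left factors. What remains is to show that for $m$ prime no proper factor of intermediate degree $2,\ldots,m-2$ can occur without forcing a linear factor, and this is where the hypothesis $\zeta \in {\rm C}(D)\cap {\rm Fix}(\sigma)$ enters: the substitution $t\mapsto\zeta t$ preserves $t^m - d$, so any putative factorization can be rotated by powers of $\zeta$, and this symmetry together with primality of $m$ collapses all intermediate factorization types to one of the two linear cases already ruled out. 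This reduction via the root of unity is the delicate step in Petit's argument and is the main obstacle.
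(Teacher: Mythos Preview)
The paper does not prove this theorem; it is quoted from Petit with only a citation, so there is no proof in the paper to compare against directly. Your outline for (i), (iv) and (v) is the standard computation and is correct; in particular your associator identity $(g\circ h)\circ k-g\circ(h\circ k)=-(qfk)\,{\rm mod}_r f$ is exactly what pins down ${\rm Nuc}_r(S_f)=E(f)$.

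There are, however, two genuine gaps. In (iii) the sentence ``$f\mid gh$ on the right with $\deg g,\deg h<m$, impossible by irreducibility'' is not justified: in a noncommutative PID an irreducible element is not prime in the naive sense, and the B\'ezout relation $1=pg+qf$ you produce does not yield $h\in Rf$, because $fh$ need not lie in $Rf$. The clean argument is module-theoretic: $R/Rf$ is a simple left $R$-module when $f$ is irreducible, and for $0\neq g\in R_m$ the map $\rho_g\colon\overline h\mapsto\overline{hg}$ is a nonzero left $R$-module endomorphism of $R/Rf$, hence an isomorphism by Schur; thus $h\circ g=0$ forces $h=0$. (Alternatively, invoke unique factorization up to similarity in $R$: the irreducible factors of $gh$ all have degree $<m$, so none can be similar to $f$.)

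In (vi) you correctly isolate the obstacle but do not overcome it. The assertion that the substitution $t\mapsto\zeta t$ ``collapses all intermediate factorization types to one of the two linear cases'' is not a proof; what is needed is the dichotomy that under the root-of-unity hypothesis $t^m-d$ is either irreducible or a product of $m$ linear factors. The paper, when proving the sharpened Theorem~\ref{thm:betterpetit}(iii), does not argue this from scratch either but cites \cite[Ex.~4, p.~344]{Bou} for precisely this fact. Once that dichotomy is available, only the right-linear-factor condition $d\neq\sigma^{m-1}(z)\cdots\sigma(z)z$ is actually required: the paper observes (in the degree $3$ and $4$ cases, and the same substitution $w=\sigma^{-(m-1)}(z)$ works in general) that a left linear factor $t-z$ of $t^m-d$ forces a right linear factor $t-\sigma^{-(m-1)}(z)$, so the second condition in (vi) is redundant.
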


Theorem \ref{thm:petit}, parts (v) and (vi), can be improved as follows
\cite{B}:

\begin{theorem}\label{thm:betterpetit}
(i)  $f(t)=t^3-d$ is irreducible in $D[t;\sigma]$ if and only if
$$d\not=\sigma^2(z)\sigma(z)z$$
for all $z\in D$.
 \\ (ii)  $f(t)=t^4-d$ is irreducible in $D[t;\sigma]$ if and only if
$$d\not=\sigma^3(z)\sigma^2(z)\sigma(z)z  $$
and
$$ \sigma^2(z_1)\sigma(z_1)z_1+\sigma^2(z_0)z_1+\sigma^2(z_1)
\sigma(z_0)\not=0 \text{ or } \sigma^2(z_0)z_0+\sigma^2(z_1)\sigma(z_0)z_0\not=d$$
 for all $z_0,z_1\in D$.
 \\ (iii) Suppose $m$ is prime and ${\rm C}(D)\cap {\rm Fix}(\sigma)$ contains a primitive $m$th root of unity.
Then $f(t)=t^m-d\in D[t;\sigma]$ is irreducible if and only if
$$d\not=\sigma^{m-1}(z)\cdots\sigma(z)z$$
 for all $z\in D$.
\end{theorem}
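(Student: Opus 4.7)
The plan is to derive (i) and (iii) by simplifying the two conditions in Theorem \ref{thm:petit}(v) and (vi), and to prove (ii) by a direct factorization analysis in $R=D[t;\sigma]$.

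For (i), specializing Theorem \ref{thm:petit}(v) to $d_2=d_1=0$, $d_0=d$ leaves the two inequalities $\sigma^2(z)\sigma(z)z\neq d$ and $\sigma^2(z)\sigma(z)z\neq\sigma^2(d)$ for all $z\in D$. I would show these are equivalent: if the second fails, applying $\sigma^{-2}$ gives $z\sigma^{-1}(z)\sigma^{-2}(z)=d$, and setting $w=\sigma^{-2}(z)$ rewrites the left-hand side as $\sigma^2(w)\sigma(w)w$, so the first also fails; the converse is the same calculation reversed. For (iii), the identical trick applied to Theorem \ref{thm:petit}(vi) shows that the second inequality $\sigma^{m-1}(d)\neq\sigma^{m-1}(z)\cdots\sigma(z)z$ is a consequence of the first, via the substitution $w=\sigma^{-(m-1)}(z)$: the relations $\sigma^{m-1-k}(w)=\sigma^{-k}(z)$ for $0\le k\le m-1$ convert $\sigma^{m-1}(w)\cdots\sigma(w)w$ into $z\sigma^{-1}(z)\cdots\sigma^{-(m-1)}(z)=\sigma^{-(m-1)}(\sigma^{m-1}(z)\cdots\sigma(z)z)$, so a violation of the second inequality forces a violation of the first. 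No additional hypothesis beyond what Petit already invokes is needed.

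For (ii), since $R$ is a PID, $f=t^4-d$ is irreducible iff it admits no proper factorization $f=gh$, and the only degree signatures are $(1,3)$, $(2,2)$, and $(3,1)$. Expanding $(t^3+q_2t^2+q_1t+q_0)(t-z)=t^4-d$ and using $t^kz=\sigma^k(z)t^k$ shows that $(t-z)$ is a right factor iff $d=\sigma^3(z)\sigma^2(z)\sigma(z)z$; expanding instead $(t-a)(t^3+g_2t^2+g_1t+g_0)=t^4-d$ gives $d=a\sigma^{-1}(a)\sigma^{-2}(a)\sigma^{-3}(a)$, and the reindexing $w=\sigma^{-3}(a)$ turns this into the same condition $d=\sigma^3(w)\sigma^2(w)\sigma(w)w$. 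So the first displayed inequality in (ii) simultaneously rules out all $(1,3)$ and $(3,1)$ factorizations.

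It remains to characterize the $(2,2)$ case. Writing $f=(t^2+bt+e)(t^2-z_1t-z_0)$ and expanding, the $t^3$- and $t^2$-coefficients determine $b$ and $e$ as explicit $\sigma$-conjugates of $z_0,z_1$, and substituting these expressions back into the vanishing of the $t$-coefficient and into the constant-term equation $-ez_0=-d$ produces exactly the pair of polynomial identities in $z_0,z_1$ displayed in (ii). A $(2,2)$ factorization therefore exists iff both identities hold simultaneously for some $z_0,z_1\in D$, so its nonexistence is the disjunction of inequalities stated in (ii); together with the first inequality this yields the full criterion. The main obstacle is the bookkeeping in the $(2,2)$ expansion: the $\sigma$ and $\sigma^2$ conjugations must be tracked carefully, because any slip in the indices changes the form of the two identities.
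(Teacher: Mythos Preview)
Your argument is correct, and for part (ii) it is essentially the paper's own: both carry out a direct factorization analysis, reducing the left linear factor case to the right linear factor case via the substitution $w=\sigma^{-3}(a)$, and then computing the remainder upon right division by $t^2-z_1t-z_0$ to extract the two displayed identities.

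For parts (i) and (iii) your route differs from the paper's. You take Theorem~\ref{thm:petit}(v),(vi) as a black box and show that the second inequality in each pair is redundant via the reindexing $w=\sigma^{-2}(z)$, respectively $w=\sigma^{-(m-1)}(z)$. The paper instead argues (i) from scratch by analyzing left and right linear factors of $t^3-d$ and showing a left factor $t-c$ forces a right factor $t-\sigma^{-2}(c)$; for (iii) it bypasses Petit entirely and appeals to the Bourbaki exercise that, in the presence of a primitive $m$th root of unity in ${\rm C}(D)\cap{\rm Fix}(\sigma)$ with $m$ prime, $t^m-d$ is either irreducible or a product of $m$ linear factors, so only right divisibility by $t-z$ needs to be excluded. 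Your approach is shorter given that Petit's criteria are already available, while the paper's argument for (iii) gives a structural reason (complete splitting) for why no intermediate-degree factors occur, rather than an algebraic cancellation between two conditions. Both are valid and yield the same criterion.
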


\begin{proof} (i)
 If $f(t)=t^3-d$ is reducible then either $f$ is divisible by a linear factor from the left or from the right.
A straightforward calculation  shows that  $f$ is divisible on the left by
 $t-z$, $z\in D$, iff $\sigma^2(d)=\sigma^2(z)\sigma(z)z$ iff $z\sigma^{-1}(z)\sigma^{-2}(z)=d$.
By \cite[1.3.11]{J96}, $f$ is divisible on the right by $t-z$, $z\in D$, iff $0=\sigma^2(z)\sigma(z)z-d$,
 which is the remainder of right division of $f$ by $t-z$.
 If $f(t)=t^3-d$ is irreducible then $t-z$ does not divide $f(t)$ from the right for any $z\in D$ and thus
$d\not=\sigma^2(z)\sigma(z)z$
for all $z\in D$.

Conversely, assuming $f(t)$ is reducible then there is a linear factor dividing $f$ from the left or from
the right. If $f(t)=g(t)(t-b)$ then $d=\sigma^2(b)\sigma(b)b$. If $f(t)=(t-c)g(t)$ this is equivalent to
 $d=c\sigma^{-1}(c)\sigma^{-2}(c)$. This implies that $(t^2+ct+c\sigma^{-1})(t-\sigma^{-2}(c))=t^3-c\sigma^{-1}(c)\sigma^{-2}(c)
 =d$ and that $t-\sigma^{-2}(c)$ divides $f(t)$ from the right. Put $b=\sigma^{-2}(c)$ to see there is $b\in D$ such that
 $\sigma^2(b)\sigma(b)b=d$.
\\ (ii)
 If $f(t)=t^4-d$ is reducible then either $f$ is divisible by a linear factor from the left, from the right, or
$f=g_1(t)g_2(t)$ for two irreducible polynomials $g_1,g_2\in R$ of degree 2.
By \cite[1.3.11]{J96},
 $f$ is divisible on the right by
a factor $t-z$, $z\in D$, iff $d=\sigma^3(z)\sigma^2(z)\sigma(z)z$.
A straightforward calculation  shows that $f$ is divisible on the left by
a factor $t-z$, $z\in D$, iff $0=\sigma^3(z)\sigma^2(z)\sigma(z)z-\sigma^3(d)$,
 which is the remainder of left division of $f$ by $t-z$.
Moreover, $f$ is divisible  on the right by $g(t)=t^2-z_1t-z_0\in R$ iff
$$[\sigma^2(z_1)\sigma(z_1)z_1+\sigma^2(z_0)z_1+\sigma^2(z_1)
\sigma(z_0)]t+[\sigma^2(z_0)z_0+\sigma^2(z_1)\sigma(z_0)z_0-d]=0$$
 which is the remainder of right division of $f(t)$ by $g(t)$. This is equivalent to
 $$\sigma^2(z_1)\sigma(z_1)z_1+\sigma^2(z_0)z_1+\sigma^2(z_1)
\sigma(z_0)=0\text{ and } \sigma^2(z_0)z_0+\sigma^2(z_1)\sigma(z_0)z_0=d.$$
Thus $f$ is irreducible if and only if
$$d\not=z\sigma(z)\sigma^2(z)\sigma(z)\text{ and }\sigma^3(d)\not=z\sigma(z)\sigma^2(z)\sigma^3(z)$$
and
$$ \sigma^2(z_1)\sigma(z_1)z_1+\sigma^2(z_0)z_1+\sigma^2(z_1)
\sigma(z_0)\not=0 \text{ or } \sigma^2(z_0)z_0+\sigma^2(z_1)\sigma(z_0)z_0\not=d$$
 for all $z_0,z_1\in D$.
Now observe that the case that $f(t)$ is divisible on the left by some $t-z$ is equivalent to
$d=z\sigma^{-1}(z)\sigma^{-2}(z)\sigma^{-3}(z)$ and putting $w=\sigma^{-3}(z)$,
we obtain
$$d=z\sigma^{-1}(z)\sigma^{-2}(z)\sigma^{-3}(z)=\sigma^{3}(w)\sigma^{2}(w)\sigma(w)w ,$$
which is equivalent to $f$ being divisible on the right by $t-\sigma^{-3}(z)$,
and we have proved the assertion.
 \\ (iii) By \cite[Ex no 4, p.~344]{Bou}, $f$ is either irreducible or a product of $m$ linear factors.
Thus $f$ is irreducible if and only if for all $z\in D$, $t-z$ does not divide $f$ on the right, which is equivalent to
the assertion.
\end{proof}

The iterated algebras ${\rm It}_R^m(D,\tau,d)$ were originally
introduced for space-time coding (\cite{MO13}, \cite{R13}, \cite{PS14}), and can be obtained from skew-polynomial rings:

\begin{theorem} \label{thm:skew}
Let $F$ and $L$ be fields, $F_0=F\cap L$, and let $K$ be a cyclic field extension of both $F$ and $L$ such that
\begin{enumerate}
\item $Gal(K/F) = \langle \sigma \rangle$ and $[K:F] = n$,
\item $Gal(K/L) = \langle \tau \rangle$ and $[K:L] = m$,
\item $\sigma$ and $\tau$ commute: $\sigma \tau = \tau \sigma$.
\end{enumerate}
 Let $D=(K/F, \sigma, c)$  be an associative cyclic division algebra over $F$ of degree $n$,
  $c\in F_0$ and $d \in D^\times$.
Then
$${\rm It}_R^m(D,\tau,d)= S_f$$
 where  $R=D[t;\widetilde{\tau}^{-1}]$ and $f(t)=t^m-d$.
\end{theorem}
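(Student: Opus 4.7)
The plan is to exhibit an explicit $F_0$-algebra isomorphism $\phi\colon {\rm It}_R^m(D,\tau,d)\to S_f$ sending the shift generator $f$ of the iterated algebra to the indeterminate $t$ of the twisted polynomial ring. First I would note that $R=D[t;\widetilde{\tau}^{-1}]$ is well-defined: since $c\in F_0$ the map $\widetilde{\tau}$ is a ring automorphism of $D$ (as recorded in Section~1.3), and because $\tau$ has order $m$ on $K$ and $\widetilde{\tau}$ acts componentwise on the standard basis of $D$, one has $\widetilde{\tau}^{\,m}={\rm id}_D$. Viewed as right $D$-modules, $S_f=\bigoplus_{i=0}^{m-1}t^i D$ and ${\rm It}_R^m(D,\tau,d)=\bigoplus_{i=0}^{m-1}f^i D$ are both free of rank $m$, so the rule $f^ix\mapsto t^ix$ defines a right $D$-linear bijection $\phi$; since constants have degree less than $m$, right division by $f$ is trivial on them and the right $D$-action on $R_m$ coincides with the multiplication $\circ$ of $S_f$ restricted to $R_m\times D$.

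The heart of the proof is to verify that $\phi$ preserves multiplication. By $F_0$-bilinearity of the product in both algebras, it suffices to check this on pairs of basis elements $(f^ix,f^jy)$ with $0\le i,j<m$ and $x,y\in D$. From the defining relation $ta=\widetilde{\tau}^{-1}(a)t$ of $R$ one immediately derives $at=t\widetilde{\tau}(a)$ and, inductively, $at^j=t^j\widetilde{\tau}^{\,j}(a)$. Hence in $R$,
$$(t^ix)(t^jy) \;=\; t^i(xt^j)y \;=\; t^{i+j}\widetilde{\tau}^{\,j}(x)y.$$
If $i+j<m$, this polynomial already has degree less than $m$, so it equals its own remainder modulo $f$, and under $\phi^{-1}$ it corresponds to $f^{i+j}\widetilde{\tau}^{\,j}(x)y$, matching the iterated-algebra formula.

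The interesting case is $i+j=m+k$ with $0\le k<m$. Setting $z=\widetilde{\tau}^{\,j}(x)y\in D$, I would carry out an explicit right division of $t^{m+k}z$ by $f=t^m-d$. Bringing the element into standard form via $\widetilde{\tau}^{\,m}={\rm id}_D$ yields $t^{m+k}z=\widetilde{\tau}^{\,-k}(z)t^{m+k}$; trying the quotient $q=t^kz$, a direct computation using $t^kd=\widetilde{\tau}^{\,-k}(d)t^k$ gives
$$t^{m+k}z \;=\; (t^kz)\cdot f \;+\; \widetilde{\tau}^{\,-k}(zd)\,t^k,$$
with remainder $\widetilde{\tau}^{\,-k}(zd)\,t^k = t^k(zd)$ of degree $k<m$. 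Under $\phi^{-1}$ this remainder is $f^k(zd)=f^{i+j-m}\widetilde{\tau}^{\,j}(x)yd$, again matching the iterated-algebra rule. Thus $\phi$ is an $F_0$-algebra isomorphism.

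The main obstacle is not conceptual but a matter of bookkeeping: one must pick the right twist ($\widetilde{\tau}^{-1}$, not $\widetilde{\tau}$) and the right convention (right division, coefficients on the right of $t^i$) so that the iterated-algebra twist $\widetilde{\tau}^{\,j}$ appears on the correct side after commuting a coefficient past $t^j$. The identity $\widetilde{\tau}^{\,m}={\rm id}_D$ is exactly what makes the reduction of $t^{m+k}$ modulo $f$ come out cleanly without picking up spurious twists on $d$; it is here that the hypothesis $c\in F_0$ (which guarantees $\widetilde{\tau}$ is an automorphism) is essential.
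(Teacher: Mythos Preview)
Your proposal is correct and follows the same approach as the paper: identify $f^i x$ with $t^i x$ and verify that the two multiplications agree on basis pairs $(f^ix,f^jy)$. The paper's proof is a one-liner that simply asserts the correspondence of the multiplication rules, whereas you carry out the explicit right division of $t^{m+k}z$ by $t^m-d$ and check the twist bookkeeping; so your argument is a fleshed-out version of the same idea rather than a different route.
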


\begin{proof}
Let $f=(0,1_D,0,\dots,0)\in A={\rm It}_R^m(D,\tau,d)$.
The multiplication on
$A= D \oplus fD \oplus f^2D \oplus \cdots \oplus f^{m-1}D$
is given by
\[
 (f^i x )(f^j y) =
  \begin{cases}
   f^{i+j}  \tl^j(x)y  & \text{if } i+j < m \\
    f^{(i+j)-m} \tl^j(x) yd  & \text{if } i+j \geq m
  \end{cases}
\]
for all $x, y \in D$  which corresponds to the multiplication of the algebra $S_f$.
\end{proof}

Theorems \ref{thm:petit}, \ref{thm:betterpetit} and \ref{thm:skew} imply:

\begin{corollary} \label{cor:nec}
Assume the setup of Theorem \ref{thm:skew}.
\\ (i) If $d\not\in F_0$ then
$${\rm Nuc}_l({\rm It}_R^m(D,\tau,d))={\rm Nuc}_m({\rm It}_R^m(D,\tau,d))=D$$ and
$${\rm Nuc}_r({\rm It}_R^m(D,\tau,d))=\{g\in S_f\,|\, fg\in Rf\}.$$
\\ (ii) ${\rm It}_R^m(D,\tau,d)$ is a division algebra if and only if $f(t)$ is irreducible in $D[t;\widetilde{\tau}^{-1}]$.
\\ (iii) ${\rm It}_R^4(D,\tau,d)$ is a division algebra if and only if
$$d\not=z\widetilde{\tau}(z)\widetilde{\tau}^2(z)\widetilde{\tau}^3(z)$$
and
$$ \widetilde{\tau}^2(z_1)\widetilde{\tau}^3(z_1)z_1+\widetilde{\tau}^2(z_0)z_1+\widetilde{\tau}^2(z_1)
\widetilde{\tau}^3(z_0)\not=0 \text{ or } \widetilde{\tau}^2(z_0)z_0+\widetilde{\tau}^2(z_1)\widetilde{\tau}^3(z_0)z_0\not=d$$
 for all $z,z_0,z_1\in D$.
\\ (iv) Suppose that $m$ is prime and in case $m\not=2,3$, additionally
 that $F_0$ contains a primitive
$m$th root of unity.
Then ${\rm It}_R^m(D,\tau,d)$ is a division algebra  if and only if
$$d\not=z\widetilde{\tau}(z)\cdots \widetilde{\tau}^{m-1}(z)$$ for all $z\in D$.
\end{corollary}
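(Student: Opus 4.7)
The plan is to feed $f(t)=t^m-d$ into the Petit-type irreducibility criteria. By Theorem \ref{thm:skew}, ${\rm It}_R^m(D,\tau,d)=S_f$ with $f\in R=D[t;\widetilde{\tau}^{-1}]$, so all four claims become assertions about $S_f$ with twisting automorphism $\sigma=\widetilde{\tau}^{-1}$.

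For (i), Lemma \ref{lem:lem5}(i) shows that when $d\in F\setminus F_0$ the nonassociative cyclic algebra $(K/L,\tau,d)$ embeds in ${\rm It}_R^m(D,\tau,d)$, so $S_f$ is nonassociative and Theorem \ref{thm:petit}(i) immediately yields the stated nuclei. For (ii), one direction is Theorem \ref{thm:petit}(iii), using that $S_f$ is a finite-dimensional $F_0$-vector space (of dimension $m^2n^2$). For the converse, if $f=gh$ with $0<\deg g,\deg h<m$, then in $S_f$ one computes $g\circ h = gh\bmod_r f = f\bmod_r f = 0$, producing zero divisors; so if $S_f$ is division then $f$ must be irreducible.

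Parts (iii) and (iv) follow by inserting $f(t)=t^m-d$ into Theorem \ref{thm:betterpetit}(ii) (for $m=4$), Theorem \ref{thm:petit}(iv) (for $m=2$), Theorem \ref{thm:betterpetit}(i) (for $m=3$), and Theorem \ref{thm:betterpetit}(iii) (for prime $m\neq 2,3$), combined with (ii). For (iv) with prime $m\neq 2,3$, I first verify the hypothesis of Theorem \ref{thm:betterpetit}(iii): since $\widetilde{\tau}$ restricts to $\tau$ on $K$ and $F_0=F\cap L\subseteq {\rm Fix}(\tau)$, a primitive $m$th root of unity in $F_0$ automatically lies in ${\rm C}(D)\cap {\rm Fix}(\widetilde{\tau}^{-1})=F\cap {\rm Fix}(\widetilde{\tau})$. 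The only friction is notational: the criteria are phrased in $\sigma,\sigma^2,\ldots$, whereas here $\sigma=\widetilde{\tau}^{-1}$. Using $\widetilde{\tau}^{\,m}={\rm id}$ to rewrite $\widetilde{\tau}^{-k}=\widetilde{\tau}^{\,m-k}$, and then performing the bijective substitution $z\mapsto \widetilde{\tau}^{-1}(z)$ in the single-variable inequalities, cyclically permutes the factors of each product so that $z$ appears in the leftmost position, matching the form displayed in the corollary; the two-variable inequality in (iii) already translates term-by-term without any substitution. I expect no genuine obstacle beyond this bookkeeping; the mathematical weight is carried entirely by Theorems \ref{thm:petit}, \ref{thm:betterpetit}, and \ref{thm:skew}.
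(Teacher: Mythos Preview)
Your proposal is correct and follows exactly the route the paper takes: the paper's entire proof is the single line ``Theorems \ref{thm:petit}, \ref{thm:betterpetit} and \ref{thm:skew} imply,'' and you have simply unpacked that implication, including the nonassociativity check for (i), both directions of (ii), the verification that a primitive $m$th root of unity in $F_0$ lies in ${\rm C}(D)\cap{\rm Fix}(\widetilde{\tau}^{-1})$, and the bookkeeping that converts $\sigma=\widetilde{\tau}^{-1}$ into the $\widetilde{\tau}$-form stated in the corollary. One small remark: your nonassociativity argument via Lemma~\ref{lem:lem5}(i) uses $d\in F\setminus F_0$, whereas Theorem~\ref{thm:skew} allows $d\in D^\times$; if $d\in D\setminus F$ you can instead observe directly that $fa-af=ad-da\neq 0$ for suitable $a\in D$, so $Rf$ is not two-sided and $S_f$ is not associative.
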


\begin{lemma} \label{le:nec}
Assume the setup of Theorem \ref{thm:skew} and $d\in F^\times$.
 If $\tau(d^n)\not=d^n$, then
 $d\not=z\widetilde{\tau}(z)\cdots \widetilde{\tau}^{m-1}(z)$
  for all $z\in D$.
\end{lemma}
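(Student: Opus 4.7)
The plan is to argue by contradiction via the reduced norm on $D$. Suppose $d=z\,\tl(z)\tl^2(z)\cdots\tl^{m-1}(z)$ for some $z\in D$; the goal is to deduce $\tau(d^n)=d^n$, contradicting the hypothesis. I would apply the reduced norm $N_{D/F}\colon D\to F$ to both sides of the assumed identity. Since $d\in F^\times$ is central in $D$ and $D$ has degree $n$, we have $N_{D/F}(d)=d^n$, and the multiplicativity of $N_{D/F}$ reduces the problem to controlling $N_{D/F}(\tl^i(z))$ for $0\leq i\leq m-1$.

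The key technical step is the Galois-equivariance
$$N_{D/F}(\tl(x))=\tau(N_{D/F}(x)) \quad \text{for all } x\in D.$$
I would derive this from the compatibility $\lambda(\tl(x))=\tau(\lambda(x))$ recorded in Section~1 (valid because $c\in F_0$), combined with the standard formula $N_{D/F}(x)=\det\lambda(x)$ for a cyclic algebra and the fact that taking $\det$ commutes with applying $\tau$ entrywise to a matrix. A routine induction then gives $N_{D/F}(\tl^i(z))=\tau^i(\alpha)$, where $\alpha:=N_{D/F}(z)\in F\subseteq K$, so that the reduced norm of the assumed identity turns into
$$d^n=\alpha\,\tau(\alpha)\tau^2(\alpha)\cdots\tau^{m-1}(\alpha)=N_{K/L}(\alpha).$$

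To finish, I would observe that every element of the form $N_{K/L}(\alpha)$ lies in $L={\rm Fix}(\tau)$, so $d^n\in F\cap L=F_0$; in particular $\tau(d^n)=d^n$, contradicting the hypothesis $\tau(d^n)\neq d^n$. I expect the only mildly delicate point to be the verification that the reduced norm is $\tau$-equivariant; everything afterwards is formal, using only multiplicativity of $N_{D/F}$ and the fact that $\tau$ generates $\mathrm{Gal}(K/L)$ of order $m$.
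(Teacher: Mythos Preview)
Your proposal is correct and follows essentially the same route as the paper. The paper assumes $d=z\tl(z)\cdots\tl^{m-1}(z)$, applies $\lambda$ to obtain $Z\tau(Z)\cdots\tau^{m-1}(Z)=dI_n$ with $Z=\lambda(z)$, takes determinants, and observes that $\det(Z)\tau(\det Z)\cdots\tau^{m-1}(\det Z)$ is $\tau$-invariant, forcing $\tau(d^n)=d^n$; your version packages the same computation through the reduced norm $N_{D/F}=\det\circ\lambda$ and the identity $N_{D/F}(\tl(x))=\tau(N_{D/F}(x))$, which is precisely the consequence of $\lambda(\tl(x))=\tau(\lambda(x))$ you cite.
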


The proof generalizes the idea of the proof of \cite[Proposition 13]{MO13}:

\begin{proof}
If $d=z\widetilde{\tau}(z)\cdots \widetilde{\tau}^{m-1}(z)$ for some $z\in D$, then for $Z=\lambda(z)$ this means
$$Z \tau(Z)\cdots \tau^{m-1}(Z)=d I_n$$ and therefore
$\det (Z) \det(\tau(Z))\cdots\det(\tau^{m-1}(Z)) =d^n$. Since the left-hand-side is fixed by $\tau^i$,  this
implies that $\tau^i(d^n)=d^n$ for $1\leq i <m$, in particular, $\tau(d^n)=d^n$.
\end{proof}

\begin{corollary} \label{cor:necII}
Assume the setup of Theorem \ref{thm:skew} and $d\in F^\times$.
 Suppose that $m$ is prime and in case $m\not=2,3$, additionally that $F_0$ contains a primitive $m$th root of unity.
\\ (i) If $\tau(d^n)\not=d^n$ then ${\rm It}_R^m(D,\tau,d)$ is a division algebra.
\\ (ii)
 If $d\in F$ such that $d^n\not\in N_{D/F_0}(D^\times)$, then ${\rm It}_R^m(D,\tau,d)$
 is a division algebra. In particular, for all $d\in F\setminus F_0$ with $d^n\not\in F_0$, ${\rm It}_R^m(D,\tau,d)$
 is a division algebra.
\end{corollary}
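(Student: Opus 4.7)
The plan is to deduce both parts from Corollary \ref{cor:nec}(iv), which under the stated restrictions on $m$ and the roots of unity characterizes ${\rm It}_R^m(D,\tau,d)$ as a division algebra by the single condition that $d\neq z\widetilde{\tau}(z)\cdots\widetilde{\tau}^{m-1}(z)$ for every $z\in D$. Part (i) is then the direct combination of this criterion with Lemma \ref{le:nec}: the lemma is precisely the contrapositive assertion that $\tau(d^n)\neq d^n$ prevents any such $z$ from existing, and the criterion converts this into division-ness.

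For part (ii) I would sharpen the determinant computation that underlies Lemma \ref{le:nec}. Suppose, aiming at the contrapositive, that $d = z\widetilde{\tau}(z)\cdots\widetilde{\tau}^{m-1}(z)$ for some $z\in D^\times$. Applying $\lambda$ and invoking the identity $\lambda(\widetilde{\tau}(x))=\tau(\lambda(x))$ recorded in Section 1.3 turns this into the matrix equation
\[ dI_n \;=\; \lambda(z)\,\tau(\lambda(z))\,\cdots\,\tau^{m-1}(\lambda(z)) \]
in ${\rm Mat}_n(K)$. Taking determinants and using that the entrywise application of $\tau$ commutes with $\det$ gives
\[ d^n \;=\; N_{D/F}(z)\cdot \tau(N_{D/F}(z))\cdots \tau^{m-1}(N_{D/F}(z)). \]
Because $\sigma$ and $\tau$ commute, $\tau$ preserves $F={\rm Fix}_K(\sigma)$, so the right-hand side lies in $F$; being manifestly $\tau$-invariant it in fact lies in $F\cap{\rm Fix}(\tau)=F_0$ and equals $N_{F/F_0}(N_{D/F}(z))=N_{D/F_0}(z)$. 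Hence $d^n\in N_{D/F_0}(D^\times)$, and the contrapositive together with Corollary \ref{cor:nec}(iv) delivers the main assertion of (ii).

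The closing ``in particular'' clause is immediate, since norms land in the base field and hence $N_{D/F_0}(D^\times)\subseteq F_0$, so the hypothesis $d^n\notin F_0$ is strictly stronger than $d^n\notin N_{D/F_0}(D^\times)$. I do not anticipate any substantive obstacle; the only bookkeeping is to identify the product of $\tau$-conjugates produced by the determinant with what the paper denotes $N_{D/F_0}$, namely the composite $N_{F/F_0}\circ N_{D/F}$, and to note that in the degenerate case $F=F_0$ (where $\tau|_F$ is trivial) the product collapses to $N_{D/F}(z)^m=N_{D/F_0}(z^m)$, which still lies in $N_{D/F_0}(D^\times)$ so the argument survives unchanged.
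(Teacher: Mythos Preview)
Your argument is correct and matches the paper's proof essentially line for line: part (i) is exactly the combination of Lemma~\ref{le:nec} with Corollary~\ref{cor:nec}(iv), and for part (ii) the paper likewise argues by contrapositive, applies the reduced norm $N_{D/F}$ (your $\det\circ\lambda$) to the product $z\widetilde{\tau}(z)\cdots\widetilde{\tau}^{m-1}(z)$ using $N_{D/F}\circ\widetilde{\tau}=\tau\circ N_{D/F}$, and identifies the result with $N_{F/F_0}(N_{D/F}(z))=N_{D/F_0}(z)$. The only cosmetic difference is that the paper invokes the norm identity by citing \cite[Proposition~4]{P13} rather than deriving it from the matrix identity $\lambda(\widetilde{\tau}(x))=\tau(\lambda(x))$ as you do.
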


\begin{proof} (i) is clear.
\\ (ii)
Since $c\in F_0\subset {\rm Fix}(\tau)=L$ we have
$N_{D/F}(\widetilde{\tau}(x))=\tau(N_{D/F}(x))$ for all $x\in D$ by \cite[Proposition 4]{P13}.
Assume that $d=z\tl(z)\cdots\tl^{m-1}(z)$, then
$$N_{D/F}(d)=N_{D/F}(z)N_{D/F}(\tl(z))\cdots N_{D/F}(\tl^{m-1}(z))=N_{D/F}(z) \tau(N_{D/F}(z))\cdots \tau^{m-1}
(N_{D/F}(z)).$$
Put $a=N_{D/F}(z)$, note that $a \tau(a)\cdots\tau^{m-1}(a)=N_{F/F_0}(a)=N_{F/F_0}(N_{D/F}(z))=N_{D/F_0}(z)\in F_0$,
 and use that $N_{D/F}(d)=d^n$ for $d\in F$.
\end{proof}

\section{The tensor product of an associative and a nonassociative cyclic algebra}

\subsection{} Let $L/F_0$ be a cyclic Galois field extension of degree $n$ with ${\rm Gal}(L/F_0)=\langle \sigma\rangle$,
and $F/F_0$ be a cyclic Galois field extension of degree $m$ with ${\rm Gal}(F/F_0)=\langle \tau\rangle$.
Let $L$ and $F$ be linearly disjoint over $F_0$ and let
$K=L\otimes_{F_0} F=L\cdot F$ be the composite of $L$ and $F$ over $F_0$, with Galois group
${\rm Gal}(K/F_0)=\langle \sigma\rangle\times \langle \tau\rangle$, where $\sigma$ and
$\tau$ are canonically extended to $K$.

In the following, let $D_0=(L/F_0,\sigma,c)$ and $D_1=(F/F_0,\tau,d)$ be two  cyclic  algebras  over $F_0$
such that $D_0$ is associative and $D_1$ is nonassociative, i.e.
$c\in F_0^\times$ and $d\in F^\times$. Let
$$A=(L/F_0,\sigma,c)\otimes_{F_0} (F/F_0,\tau,d).$$
Then $K$  is a subfield of $A$ of degree $mn$ over $F_0$ and $K=L\otimes_{F_0} F\subset {\rm Nuc}(A)$.

Let $\{1,e,e^2,\dots,e^{n-1}\}$
be the standard basis of the  $L$-vector space $D_0$ and
$\{1,f,f^2,
\dots,
\\ f^{m-1}\}$
be the standard basis of the $F$-vector space $D_1$.
$A$ is a $K$-vector space with basis
$$\{1\otimes 1, e\otimes 1,\dots, e^{n-1}\otimes 1 ,1\otimes f, e\otimes f, \dots, e^{n-1}\otimes f^{m-1}\}.$$
 Identify
$$A=K\oplus eK \oplus \dots\oplus e^{n-1}K\oplus fK  \oplus efK \oplus \dots\oplus e^{n-1}f^{m-1}K.$$
Note that  $D_0\otimes_{F_0} F=(K/F,\sigma,c)$.
An element in $\lambda(A)$ has the form
\begin{equation} \label{eq:nbynass}
\left[ \begin{array}{ccccc}
Y_0     &  d\tau(Y_{n-1}) & d\tau^2(Y_{n-2}) & \dots  & d\tau^{m-1}(Y_1) \\
Y_1     & \tau(Y_0)            & d\tau^2(Y_{n-1}) & \dots  & d\tau^{m-1}(Y_2) \\
\vdots  &                        &          \vdots         &        &  \vdots                   \\
Y_{n-2} & \tau(Y_{n-3})        & \tau^2(Y_{n-4})           & \dots  & d\tau^{m-1}(Y_{n-1})\\
Y_{n-1} & \tau(Y_{n-2})        & \tau^2(Y_{n-3})           & \dots  &  \tau^{m-1}(Y_0) \end{array} \right]
\end{equation}
with
$Y_i\in \lambda(D_0\otimes_{F_0}F)$.
That means, $Y_i\in{\rm Mat}_{n}(K)$, and when the entries in $Y_i$ are restricted to elements in $L$,
$Y_i\in \lambda(D_0)$ (multiplication with $d$
in the upper right triangle of the matrix means simply scalar multiplication with $d$).

\begin{theorem} \label{main}
 (i)  $(L/F_0,\sigma,c)\otimes_{F_0} (F/F_0,\tau,d)\cong {\rm It}_R^m(D_0\otimes_{F_0}F,\tau,d)$.
\\ (ii)
Suppose that $D=(L/F_0,\sigma,c)\otimes_{F_0}F$ is a division algebra. Then
$$S_f\cong(L/F_0,\sigma,c)\otimes_{F_0} (F/F_0,\tau,d)$$
where $R=D[t;\widetilde{\tau}^{-1}]$ and
$f(t)=t^m-d$.
\end{theorem}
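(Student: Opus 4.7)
The plan for part (i) is to exhibit an explicit $F_0$-algebra isomorphism. Set $D := D_0 \otimes_{F_0} F$, which is a subalgebra of $A$ naturally isomorphic to $(K/F, \sigma, c)$ via $e \otimes 1 \mapsto e$ and $L \otimes F \hookrightarrow K$. Since $\mathrm{Nuc}(D_0) \otimes_{F_0} \mathrm{Nuc}(D_1) = D_0 \otimes_{F_0} F = D$, we have $D \subset \mathrm{Nuc}(A)$, and in particular $K \subset \mathrm{Nuc}(A)$. Both $A$ and $\mathrm{It}_R^m(D, \tau, d)$ are right $K$-vector spaces of dimension $nm$, with respective $K$-bases $\{e^p \otimes f^q\}$ and $\{f^q e^p\}$ for $0 \le p < n$, $0 \le q < m$. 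Define $\phi : A \to \mathrm{It}_R^m(D, \tau, d)$ by $\phi(e^p \otimes f^q) = f^q e^p$ and extend right-$K$-linearly; this is automatically a $K$-vector space bijection.

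The heart of the proof is to check $\phi$ preserves multiplication. I would compute $(e^p \otimes f^q)(e^r \otimes f^s) = (e^p e^r) \otimes (f^q f^s)$ and, in parallel, $(f^q e^p)(f^s e^r)$ via the iterated algebra rule, in all four cases according to whether $p+r$ exceeds $n$ and $q+s$ exceeds $m$. The computations match because (a) $\tl$ fixes $e^p$, so $\tl^s(e^p) = e^p$; (b) $c \in F_0$ lies in the centre of $A$; (c) $d \in F$ is central in $D$, so commutes with $e^{p+r}$ or $e^{p+r-n}$; and (d) $cd = dc$ in $K$. To pass from basis elements to general products, one uses that $K \subset \mathrm{Nuc}(A)$, so for $k, k' \in K$
$$\bigl((e^p \otimes f^q) k\bigr)\bigl((e^r \otimes f^s) k'\bigr) = \bigl((e^p \otimes f^q)(e^r \otimes f^s)\bigr) \cdot \bigl(\sigma^r \tau^s(k)\, k'\bigr),$$
and the parallel identity holds in $\mathrm{It}_R^m(D, \tau, d)$ by moving scalars past $e^r$ via the cyclic algebra relation $k e^r = e^r \sigma^r(k)$ and past $f^s$ via $\tl^s$.

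For part (ii), the hypothesis that $D = (K/F,\sigma,c)$ is a division algebra, together with $c \in F_0$ and $d \in D^\times$, is precisely the setup of Theorem \ref{thm:skew}, which gives $\mathrm{It}_R^m(D, \tau, d) \cong S_f$ with $f(t) = t^m - d \in D[t; \tl^{-1}]$. Composing with (i) yields $A \cong S_f$.

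The main obstacle is careful bookkeeping of the right-$K$-action and the interaction of the twisting automorphisms $\sigma$ (extended trivially on $F$) and $\tau$ (extended trivially on $L$) on $K = L \otimes_{F_0} F$. Because $A$ itself is nonassociative, one cannot freely reassociate; the calculation is enabled by localising all nontrivial reassociations inside the nucleus $D$, after which the four-case basis verification becomes a direct comparison of the cyclic relations on both sides.
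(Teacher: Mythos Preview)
Your argument is correct. For part (i) you construct the isomorphism explicitly on the standard $K$-basis, sending $e^p\otimes f^q\mapsto f^qe^p$, and verify multiplicativity by a four-case computation, using $K\subset{\rm Nuc}(A)$ (and implicitly $K\subset{\rm Nuc}_r({\rm It}_R^m(D,\tau,d))$, which holds since $(t^m-d)\kappa=\kappa(t^m-d)$ for $\kappa\in K$) to reduce arbitrary products to products of basis elements. The paper takes a different route: it simply observes that the matrix \eqref{eq:nbynass} representing left multiplication in $A$ with respect to the standard basis coincides with the matrix \eqref{equ:main} representing left multiplication in ${\rm It}_R^m(D,\tau,d)$, so the two algebras have identical structure constants and are therefore isomorphic. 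Your approach is more self-contained and makes the map explicit, at the cost of the case-by-case bookkeeping; the paper's is a two-line appeal to the matrix formalism already developed in Sections~1--2. For part (ii) both proofs are identical: invoke Theorem~\ref{thm:skew} and compose with (i).
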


\begin{proof}
(i) The matrices in (\ref{eq:nbynass}) also represent left multiplication with an element in
the algebra ${\rm It}_R^m((K/F,\sigma,c),\tau,d)$, see (\ref{equ:main}). Thus the multiplications of both algebras are the same.
\\ (ii) If $D_0\otimes_{F_0}F$ is a division algebra then
 $S_f\cong {\rm It}_R^m((K/F,\sigma,c),\tau,d)$ with $R=(D_0\otimes_{F_0}F)[t;\widetilde{\tau}^{-1}]$ and
$f(t)=t^m-d$ by Theorem \ref{thm:skew}.
\end{proof}

\begin{corollary}
(i) The cyclic algebras
$$(K/L,\tau,d) \text{ and } (K/F,\sigma,c)$$
viewed as algebras over $F_0$,  are subalgebras of
$$(L/F_0,\sigma,c)\otimes_{F_0} (F/F_0,\tau,d)$$
 of dimension $m^2n$, resp. $n^2m$.
\\ (ii) The subalgebra $(K/L,\tau,d)$ is nonassociative and thus division if $m$ is prime
or, if $m$ is not prime, if $1,d,\dots,d^{m-1}$ are linearly independent over $L$.
\\ (iii)
If $m=st$ and $F_s={\rm Fix}(\tau^s)$  then
$$(L/F_0,\sigma,c)\otimes_{F_0} (F/F_s,\tau^s,d) $$
 is isomorphic to a subalgebra of
$$(L/F_0,\sigma,c)\otimes_{F_0} (F/F_0,\tau,d)={\rm It}_R^m(D_0\otimes_{F_0}F,\tau,d).$$
\end{corollary}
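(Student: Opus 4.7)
The plan is to reduce all three statements to Theorem \ref{main}(i) plus a mild generalization of Lemma \ref{lem:lem5}.

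For part (i), I would first invoke Theorem \ref{main}(i) to identify $A=(L/F_0,\sigma,c)\otimes_{F_0}(F/F_0,\tau,d)$ with ${\rm It}_R^m(D_0\otimes_{F_0}F,\tau,d)$. The cyclic algebra $(K/L,\tau,d)$, viewed over $F_0$, then sits inside $A$ by Lemma \ref{lem:lem5}(i); concretely, it is the $F_0$-span of the elements $\{e^if^j\ell\}$ with $\ell\in K$, $0\le i<n$, $0\le j<m$, which is a subalgebra under the iterated multiplication. For $(K/F,\sigma,c)$, I would use the fact stated in the setup that $D_0\otimes_{F_0}F\cong(K/F,\sigma,c)$, and this already sits as the ``first slot'' $D\oplus 0\oplus\cdots\oplus 0$ of ${\rm It}_R^m(D,\tau,d)$. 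The dimension counts follow at once: $\dim_{F_0}(K/L,\tau,d)=m\cdot[K:F_0]/m\cdot m= m^2 n$, and $\dim_{F_0}(K/F,\sigma,c)=n\cdot[K:F_0]/n\cdot n=n^2 m$.

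For part (ii), the algebra $D_1=(F/F_0,\tau,d)$ was assumed nonassociative, i.e.\ $d\in F\setminus F_0$. Since $F\cap L=F_0$, this forces $d\in K\setminus L$, so the subalgebra $(K/L,\tau,d)$ has its constant in $K\setminus L={\rm Nuc}(K/L,\tau,d)\setminus L$ and is therefore nonassociative by the definition recalled in Section 1.2. The division criterion then comes straight from the preliminaries: if $m=[K:L]$ is prime, any such nonassociative cyclic algebra is division, and otherwise Steele's criterion \cite{S13} says division iff $1,d,\dots,d^{m-1}$ are $L$-linearly independent.

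For part (iii), the core step is to generalize the argument of Lemma \ref{lem:lem5}(ii) from $(m,s)=(2s,s)$ to an arbitrary factorisation $m=st$. Writing $D=D_0\otimes_{F_0}F$, I would verify directly from the multiplication rule that
\[
D\oplus f^sD\oplus f^{2s}D\oplus\cdots\oplus f^{(t-1)s}D
\]
is closed in ${\rm It}_R^m(D,\tau,d)$, since $(f^{is}x)(f^{js}y)=f^{(i+j)s}\widetilde{\tau}^{\,js}(x)y$ when $i+j<t$ and equals $f^{((i+j)-t)s}\widetilde{\tau}^{\,js}(x)yd$ otherwise. Using $(\widetilde{\tau})^s=\widetilde{\tau^s}$ (immediate from the entry-wise definition of $\widetilde{\tau}$), this subalgebra is exactly ${\rm It}_R^t(D,\tau^s,d)$.

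The final identification is that ${\rm It}_R^t(D_0\otimes_{F_0}F,\tau^s,d)\cong(L/F_0,\sigma,c)\otimes_{F_0}(F/F_s,\tau^s,d)$. I would obtain this by applying Theorem \ref{main}(i) after base change from $F_0$ to $F_s$: since $L$ and $F$ are linearly disjoint over $F_0$ and $F_s\subset F$, also $L$ and $F_s$ are linearly disjoint over $F_0$, so $L_s:=L\cdot F_s=L\otimes_{F_0}F_s$ is cyclic over $F_s$ with Galois group $\langle\sigma\rangle$, and $(L/F_0,\sigma,c)\otimes_{F_0}F_s\cong(L_s/F_s,\sigma,c)$. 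Then Theorem \ref{main}(i) applied over $F_s$ with the pair $(L_s/F_s,\sigma)$ and $(F/F_s,\tau^s)$ (whose composite is $L_s\cdot F=K$) yields the required isomorphism, using $(L_s/F_s,\sigma,c)\otimes_{F_s}F=(K/F,\sigma,c)=D_0\otimes_{F_0}F$. The only mild obstacle is the bookkeeping for this base-change argument; once the linear disjointness of $L$ and $F_s$ over $F_0$ is recorded, the rest is formal.
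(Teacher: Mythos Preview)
Your argument is correct, with one minor slip: in part (i), your ``concrete'' description of $(K/L,\tau,d)$ as the $F_0$-span of all $e^if^j\ell$ with $\ell\in K$, $0\le i<n$, $0\le j<m$ is in fact the whole algebra $A$; what you want is the $i=0$ piece $K\oplus fK\oplus\cdots\oplus f^{m-1}K$, which is precisely what Lemma~\ref{lem:lem5}(i) gives. Apart from this, parts (i) and (ii) match the paper's proof essentially verbatim: the paper cites Lemma~\ref{lem:lem5} and \cite[Lemma~5]{PS14} for (i), and uses the same $d\in F\setminus F_0\Rightarrow d\in K\setminus L$ observation together with the division criteria from Section~1.2 for (ii).

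For part (iii) your route differs from the paper's. The paper simply invokes Steele's thesis \cite[Theorem~3.3.2]{S13}, whereas you stay within the present paper's framework: first generalising Lemma~\ref{lem:lem5}(ii) to an arbitrary factorisation $m=st$ to exhibit $D\oplus f^sD\oplus\cdots\oplus f^{(t-1)s}D\cong{\rm It}_R^t(D,\tau^s,d)$ as a subalgebra, and then reapplying Theorem~\ref{main}(i) over the intermediate base $F_s$. The bookkeeping you flag is indeed routine: linear disjointness of $L_s=L\cdot F_s$ and $F$ over $F_s$ follows from that of $L$ and $F$ over $F_0$ by the degree count $[K:F_s]=nt=[L_s:F_s]\cdot[F:F_s]$. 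Your approach has the virtue of being self-contained and of making transparent exactly where the subalgebra sits inside the iterated algebra; the paper's citation is shorter but relies on an external structural result about nonassociative cyclic algebras.
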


\begin{proof}
 (i) This is Lemma \ref{lem:lem5} and \cite{PS14}, Lemma 5 (which also holds if $D_0\otimes_{F_0}F$
is not division).
\\ (ii) This follows from (i), since $(F/F_0,\tau,d)$ is nonassociative if and only if $d\in F\setminus F_0$.
This means $d\in K\setminus L$. The same argument holds for nonassociative $(L/F_0,\sigma,c)$.
\\ (iii) This follows from \cite{S13}, Theorem 3.3.2, see also \cite{S12}.
\end{proof}

\subsection{Conditions on the tensor product to be a division algebra}
To see when the tensor product  of two associative algebras
is a division algebra we have the classical result by Jacobson \cite[Theorem 1.9.8]{J96}, see also Albert
\cite[Theorem 12, Ch. XI]{A}:

\begin{theorem} \label{associativetensor}
Let $(F/F_0,\tau,d)$ be a cyclic associative division algebra of prime degree $m$. Suppose that
 $D_0$
 is a central associative algebra over $F_0$ such that
$D=D_0\otimes_{F_0}F$ is a division algebra.
Then $D_0\otimes_{F_0} (F/F_0,\tau,d)$
is a division algebra if and only if
$$d\not=\widetilde{\tau}^{m-1}(z)\cdots\widetilde{\tau}(z)z$$
 for all $z\in D$.
\end{theorem}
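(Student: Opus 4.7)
The plan is to reduce the statement to an irreducibility question in a twisted polynomial ring and then exploit the primality of $m$ together with central simplicity to match the explicit norm condition.

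First, since $(F/F_0,\tau,d)$ is associative we have $d\in F_0^\times$, and by hypothesis $D=D_0\otimes_{F_0}F$ is a division algebra. Theorem~\ref{main}(ii) then identifies
$$A := D_0\otimes_{F_0}(F/F_0,\tau,d)\;\cong\;S_f, \qquad f(t)=t^m-d \in R=D[t;\widetilde{\tau}^{-1}].$$
Since $d\in F_0$ is fixed by $\widetilde{\tau}$, $f$ is central in $R$, so $Rf=fR$ is a two-sided ideal and $S_f$ coincides with the associative quotient ring $R/Rf$. Petit's criterion (Theorem~\ref{thm:petit}(iii)) then gives: $A$ is a division algebra if and only if $f$ is irreducible in $R$. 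The reverse implication is immediate in this associative setting, for any factorisation $f=gh$ with $\deg g,\deg h<m$ yields $g\circ h=gh\,\mathrm{mod}_r\,f=0$ in $S_f$, producing nonzero zero divisors.

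Next, I would establish the equivalence
$$f\;\text{is irreducible in}\;R \iff d\neq \widetilde{\tau}^{m-1}(z)\cdots\widetilde{\tau}(z)z \;\text{for all}\; z\in D.$$
The direction ``$\Leftarrow$'' proceeds by a right-division computation, exactly as in the proof of Theorem~\ref{thm:betterpetit}(i): by \cite[1.3.11]{J96}, the remainder of right division of $t^m-d$ by $t-w$ equals $\widetilde{\tau}^{m-1}(w)\cdots\widetilde{\tau}(w)w-d$, so any $w\in D$ realising the norm identity exhibits a right linear factor of $f$ and hence reducibility.

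The main obstacle is the converse direction: given that $f$ is reducible, produce some $w\in D$ with $d=\widetilde{\tau}^{m-1}(w)\cdots\widetilde{\tau}(w)w$. This is where the primality of $m$ and the hypothesis that $(F/F_0,\tau,d)$ is an associative cyclic \emph{division} algebra are essential. I would argue via central simplicity: $A\cong R/Rf$ is central simple over $F_0$ as a tensor product of central simple $F_0$-algebras, of degree $nm$, so if $A$ is not a division algebra Wedderburn gives $A\cong M_k(D')$ for some $k\geq 2$ and some $F_0$-division algebra $D'$. Combined with the fact that $f$ is central in the PID $R$ and $m$ is prime, any nontrivial two-sided factorisation of $f$ forces a complete splitting of $f$ into $m$ right linear factors in $R$ — the associative counterpart of the Bourbaki argument \cite[Ex~4, p.~344]{Bou} used in Theorem~\ref{thm:betterpetit}(iii), with central simplicity of $A$ replacing the need for a primitive $m$th root of unity in $F_0$. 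Extracting any one such factor $t-w$ yields the required element $w\in D$, and completes the proof.
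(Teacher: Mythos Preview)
The paper does \emph{not} prove Theorem~\ref{associativetensor}: it is quoted verbatim as the classical result of Jacobson \cite[Theorem~1.9.8]{J96} and Albert \cite[Theorem~12, Ch.~XI]{A}, with no argument supplied. So there is no ``paper's own proof'' to compare against; your proposal is an attempt to recover the classical proof using the machinery developed in Sections~2--3.

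Your overall strategy is sound and is in fact close to Jacobson's own treatment. Two points deserve attention. First, you invoke Theorem~\ref{main}(ii) to obtain $A\cong S_f$, but that theorem is stated only for a \emph{cyclic} algebra $D_0=(L/F_0,\sigma,c)$, whereas Theorem~\ref{associativetensor} allows an arbitrary central simple $D_0$. The identification $D_0\otimes_{F_0}(F/F_0,\tau,d)\cong D[t;\widetilde\tau^{-1}]/(t^m-d)$ with $\widetilde\tau={\rm id}\otimes\tau$ does hold in this generality, but you should say so directly rather than appeal to Theorem~\ref{main}.

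Second, the crucial converse step---that reducibility of $f$ forces a linear right factor---is asserted somewhat loosely. The clean argument is: since $d\in F_0$ is $\widetilde\tau$-fixed, $f$ is central, so $R/Rf\cong A$ is a ring; it is central simple over $F_0$ as a tensor product of central simple algebras, hence simple Artinian. All simple right $R/Rf$-modules are therefore isomorphic, so all irreducible right factors of $f$ in the PID $R$ are similar and in particular have equal degree. As $\deg f=m$ is prime, either $f$ is irreducible or it is a product of $m$ linear factors, and any one of these yields the required $w\in D$ via the remainder formula you quote. This is precisely what ``central simplicity replaces the root of unity'' means; making that module-theoretic step explicit would close the gap.
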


Note that here
$$d\not=\widetilde{\tau}^{m-1}(z)\cdots\widetilde{\tau}(z)z
\text{ is equivalent to }d\not=z\widetilde{\tau}(z)\cdots \widetilde{\tau}^{m-1}(z)$$
since $d\in F_0$.
This classical result has the following immediate generalization to the nonassociative setting:

\begin{theorem} \label{thm:biquats}
 Let $(F/F_0,\tau,d)$  be a nonassociative cyclic algebra of degree $m$.
 Let $D_0=(L/F_0,\sigma,c)$ be an associative cyclic  algebra  over $F_0$ of degree $n$,
such that $D=D_0\otimes_{F_0}F=(K/F,\sigma,c)$ is a division algebra.
Let $L$ and $F$ be linearly disjoint over $F_0$.

Assume $m$ is prime and in case $m\not=2,3$, additionally that
 $F_0$ contains a primitive $m$th root of unity. Then
$$(L/F_0,\sigma,c)\otimes_{F_0} (F/F_0,\tau,d)$$
  is a division algebra if and only if
$$d\not=z\widetilde{\tau}(z)\cdots \widetilde{\tau}^{m-1}(z)
$$ for all $z\in D$.
\end{theorem}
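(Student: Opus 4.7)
The plan is to bootstrap the theorem off the machinery already assembled in Sections 2 and 3: identify the tensor product with an iterated algebra, then with an $S_f$ for a binomial twisted polynomial, and read off the irreducibility criterion from Theorem \ref{thm:betterpetit}(iii). In short, the statement should fall out as an immediate corollary of Theorem \ref{main}(ii) combined with Corollary \ref{cor:nec}(iv); the bulk of the content has already been developed upstream.

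First I would invoke Theorem \ref{main}(ii). Under the standing hypothesis that $D = D_0 \otimes_{F_0} F = (K/F,\sigma,c)$ is a division algebra, it yields the isomorphism $(L/F_0,\sigma,c)\otimes_{F_0}(F/F_0,\tau,d) \cong S_f$ with $R = D[t;\widetilde{\tau}^{-1}]$ and $f(t) = t^m - d$. By Theorem \ref{thm:petit}(iii), together with the observation that $S_f$ is finite-dimensional over $F_0$, the tensor product is a division algebra if and only if $f(t)$ is irreducible in $R$.

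Next I would apply Theorem \ref{thm:betterpetit}(iii) to the skew polynomial ring $R = D[t;\widetilde{\tau}^{-1}]$. The hypothesis that $m$ is prime, and that $F_0$ contains a primitive $m$th root of unity when $m \ne 2,3$, is exactly what the theorem demands: since $c \in F_0$, one has $F_0 \subseteq \mathrm{C}(D) \cap \mathrm{Fix}(\widetilde{\tau}^{-1})$, so the required root of unity sits in the right place. The criterion then reads: $f(t) = t^m - d$ is irreducible in $R$ if and only if $d \ne \widetilde{\tau}^{-(m-1)}(z)\cdots\widetilde{\tau}^{-1}(z)\, z$ for all $z \in D$. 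Substituting $z = \widetilde{\tau}^{m-1}(w)$ — a bijection of $D$ onto itself since $\widetilde{\tau}$ is an $F_0$-linear automorphism of $D$ — converts this to the symmetric-looking condition $d \ne w\,\widetilde{\tau}(w)\cdots\widetilde{\tau}^{m-1}(w)$, which is the form claimed.

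There is no real obstacle to surmount here, beyond the small bookkeeping point that the skew polynomial ring is built from $\widetilde{\tau}^{-1}$ rather than $\widetilde{\tau}$, so the irreducibility criterion must be rewritten via the substitution above; once that is noted, the proof is a one-line assembly of Theorem \ref{main}(ii) and Corollary \ref{cor:nec}(iv). It is worth emphasizing that the theorem really is just the nonassociative mirror of Theorem \ref{associativetensor}: the algebraic condition on $d$ is formally identical, and the only difference in the statement is the allowance $d \in F \setminus F_0$.
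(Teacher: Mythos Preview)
Your proposal is correct and follows essentially the same route as the paper's own proof, which simply cites Theorem \ref{main} together with Theorem \ref{thm:betterpetit} (equivalently, Corollary \ref{cor:nec}(iv)). Your added remarks on the $\widetilde{\tau}^{-1}$ versus $\widetilde{\tau}$ bookkeeping and on why the root-of-unity hypothesis lands in $\mathrm{C}(D)\cap\mathrm{Fix}(\widetilde{\tau}^{-1})$ make explicit details the paper leaves implicit, but the underlying argument is identical.
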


\begin{proof}
This is Theorem \ref{main} together with  Theorem \ref{thm:betterpetit} (resp., \cite{P13}, Theorem 3.2 for
 $n=2$).
\end{proof}

More generally, we conclude:

\begin{theorem} \label{thm:classicalnew0}
Let $(F/F_0,\tau,d)$  be an associative or nonassociative cyclic algebra of degree $m$.
 Let $D_0=(L/F_0,\sigma,c)$ be an associative cyclic  algebra  over $F_0$ of degree $n$,
such that $D=D_0\otimes_{F_0}F=(K/F,\sigma,c)$ is a division algebra. Let $L$ and $F$ be linearly disjoint over $F_0$.

Then $$(L/F_0,\sigma,c)\otimes_{F_0} (F/F_0,\tau,d)$$
  is a division algebra if and only if
  $$f(t)=t^m-d\in D[t;\widetilde{\tau}^{-1}]$$
  is irreducible.
  \end{theorem}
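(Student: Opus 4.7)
The plan is to obtain the conclusion as an immediate consequence of Theorem \ref{main}(ii) combined with Corollary \ref{cor:nec}(ii), with essentially no new work beyond verifying that the hypotheses of those results cover both the associative case ($d\in F_0^\times$) and the nonassociative case ($d\in F\setminus F_0$) simultaneously.

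First I would check that the setup of Theorem \ref{thm:skew} is in force for the data $(D,\widetilde{\tau},d)$: since $D_0=(L/F_0,\sigma,c)$ is associative cyclic we have $c\in F_0^\times$, and $d\in F^\times\subset D^\times$ regardless of whether $d$ lies in $F_0$ or in $F\setminus F_0$; furthermore $D=D_0\otimes_{F_0}F=(K/F,\sigma,c)$ is a division algebra by hypothesis, and $L$ and $F$ are linearly disjoint over $F_0$. Thus Theorem \ref{main}(ii) yields an $F_0$-algebra isomorphism
\[
A\;:=\;(L/F_0,\sigma,c)\otimes_{F_0}(F/F_0,\tau,d)\;\cong\;S_f,
\]
where $R=D[t;\widetilde{\tau}^{-1}]$ and $f(t)=t^m-d\in R$.

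Second I would invoke Corollary \ref{cor:nec}(ii), which asserts that $S_f={\rm It}_R^m(D,\tau,d)$ is a division algebra if and only if $f(t)$ is irreducible in $R$; composing with the isomorphism above then gives the theorem. The one point worth checking is that this criterion applies uniformly in both cases, since in the statement $d$ is allowed in all of $F^\times$. This is clear: Corollary \ref{cor:nec}(ii) places no restriction on $d$ beyond membership in $D^\times$; the forward implication (reducibility produces zero divisors, namely if $f=gh$ with ${\rm deg}(g),{\rm deg}(h)<m$ then $g\circ h=gh\,{\rm mod}_r f=0$ in $S_f$) is independent of associativity; and the backward implication follows from Theorem \ref{thm:petit}(iii), whose finite-dimensionality assumption is met because $S_f$ is an $F_0$-vector space of dimension $m^2n^2$.

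Thus the only real ``obstacle'' is organizational rather than mathematical: making sure that the chain $A\cong {\rm It}_R^m(D,\tau,d)=S_f$ established earlier survives the degeneration of $(F/F_0,\tau,d)$ to an associative cyclic algebra when $d\in F_0$. Since the isomorphism in Theorem \ref{main}(i) is derived by comparing the block matrix form (\ref{eq:nbynass}) with (\ref{equ:main}) entry by entry, and this comparison is unaffected by where $d$ sits in $F$, no separate argument is needed for the two cases.
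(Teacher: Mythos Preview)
Your proposal is correct and follows exactly the route the paper intends: the paper does not spell out a proof of Theorem \ref{thm:classicalnew0} but introduces it with ``More generally, we conclude,'' signalling that it drops out of Theorem \ref{main} and Corollary \ref{cor:nec}(ii), which is precisely the chain you assemble. Your additional remarks checking that the argument is uniform in the associative and nonassociative cases are sound and go slightly beyond what the paper makes explicit.
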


This and Corollary  \ref{cor:necII} yields:

\begin{theorem} \label{thm:classicalnew}
Let $(F/F_0,\tau,d)$  be a nonassociative cyclic algebra of degree $m$.
 Let $D_0=(L/F_0,\sigma,c)$ be an associative cyclic  algebra  over $F_0$ of degree $n$,
such that $D=D_0\otimes_{F_0}F=(K/F,\sigma,c)$ is a division algebra.
Let $L$ and $F$ be linearly disjoint over $F_0$.
\\ (a)  Let $m=4$. Then $(L/F_0,\sigma,c)\otimes_{F_0} (F/F_0,\tau,d)$
  is a division algebra if and only if
$$d\not=z\widetilde{\tau}(z)\widetilde{\tau}^2(z)\widetilde{\tau}^3(z) $$
and
$$ \widetilde{\tau}^2(z_1)\widetilde{\tau}^3(z_1)z_1+\widetilde{\tau}^2(z_0)z_1+\widetilde{\tau}^2(z_1)
\widetilde{\tau}^3(z_0)\not=0 \text{ or } \widetilde{\tau}^2(z_0)z_0+\widetilde{\tau}^2(z_1)\widetilde{\tau}^3(z_0)z_0\not=d$$
 for all $z_0,z_1\in D$.
\\ (b) Suppose that
  $m$ is prime and in case $m\not=2,3$, additionally  that $F_0$ contains a primitive $m$th root of unity.
  \\ (i) If $\tau(d^n)\not=d^n$ then
 $$(L/F_0,\sigma,c)\otimes_{F_0} (F/F_0,\tau,d)$$
  is a division algebra.
 \\ (ii) If  $d^n\not\in N_{D/F_0}(D^\times)$, then
  $$(L/F_0,\sigma,c)\otimes_{F_0} (F/F_0,\tau,d)$$
 is a division algebra.
 \\ (iii) For all $d^n\not\in F_0$,
  $$(L/F_0,\sigma,c)\otimes_{F_0} (F/F_0,\tau,d)$$
 is a division algebra.
\end{theorem}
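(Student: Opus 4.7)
The plan is to reduce all three parts of the theorem to the irreducibility criteria already established for $f(t) = t^m - d$ in the twisted polynomial ring $R = D[t;\widetilde{\tau}^{-1}]$. First I would invoke Theorem \ref{main}(ii), which uses the standing hypothesis that $D = D_0 \otimes_{F_0} F = (K/F,\sigma,c)$ is a division algebra to identify
\[
(L/F_0,\sigma,c) \otimes_{F_0} (F/F_0,\tau,d) \;\cong\; S_f, \qquad f(t) = t^m - d \in R.
\]
By Corollary \ref{cor:nec}(ii), this algebra is a division algebra if and only if $f(t)$ is irreducible in $R$. So the whole theorem becomes a matter of transcribing irreducibility criteria in the language of $\widetilde{\tau}$.

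For part (a), I would directly apply Corollary \ref{cor:nec}(iii), which is the $m=4$ specialization: the two displayed conditions there coincide term-for-term with the statement of part (a). For part (b), the hypotheses on $m$ and $F_0$ are precisely what is required to trigger the ``prime $m$'' case of the criterion. Note that since $D$ is central simple over $F$ we have $\mathrm{C}(D) = F$, and $\mathrm{Fix}(\widetilde{\tau}^{-1}) \cap F = \mathrm{Fix}(\tau) = F_0$, so any primitive $m$th root of unity assumed in $F_0$ lies in $\mathrm{C}(D) \cap \mathrm{Fix}(\widetilde{\tau}^{-1})$. Hence Theorem \ref{thm:betterpetit}(iii), and therefore Corollary \ref{cor:necII}, is in force. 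Items (i) and (ii) then follow verbatim from Corollary \ref{cor:necII}(i) and (ii), respectively. For item (iii), I would simply observe that the reduced norm $N_{D/F_0} \colon D \to F_0$ takes values in $F_0$ by construction, so $d^n \notin F_0$ at once implies $d^n \notin N_{D/F_0}(D^\times)$, reducing (iii) to (ii).

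There is no substantive obstacle: all the real content has been packaged into Theorem \ref{main}, Corollary \ref{cor:nec}, and Corollary \ref{cor:necII}. The only side-check worth making is that the hypothesis $c \in F_0$ of Theorem \ref{thm:skew}, which underlies Theorem \ref{main}(ii) and the subsequent corollaries, is automatic here because $D_0 = (L/F_0,\sigma,c)$ is by definition a cyclic algebra over $F_0$, so $c \in F_0^{\times}$. Once this is noted, the entire proof reduces to citing the three results in sequence.
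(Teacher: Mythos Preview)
Your proposal is correct and follows essentially the same route as the paper: the paper simply states that the theorem follows from Theorem~\ref{thm:classicalnew0} together with Corollary~\ref{cor:necII}, which is exactly your reduction via Theorem~\ref{main}, Corollary~\ref{cor:nec}, and Corollary~\ref{cor:necII}. Your additional side-checks (that $c\in F_0^\times$ and that the root-of-unity hypothesis lands in $\mathrm{C}(D)\cap\mathrm{Fix}(\widetilde{\tau}^{-1})=F_0$) are correct and make explicit what the paper leaves implicit.
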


In special cases,
 Theorem \ref{thm:biquats} yields straightforward conditions for the tensor product to be a division algebra:

 \begin{theorem} \label{biquat}
 Let $F_0$ be of characteristic not 2.
 Let $(a,c)_{F_0}$ be a quaternion algebra over $F_0$ which is a division algebra over $F=F_0(\sqrt{b})$,
 and $(F_0(\sqrt{b})/F_0,\tau,d)$ a nonassociative quaternion algebra.
 Then the tensor product
$$(a,c)_{F_0}\otimes_{F_0} (F_0(\sqrt{b})/F_0,\tau,d)$$
 is a division algebra over $F_0$.
 \end{theorem}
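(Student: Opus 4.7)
The plan is to apply Theorem \ref{thm:biquats} with $n=m=2$, writing $(a,c)_{F_0}$ as the cyclic algebra $(L/F_0,\sigma,c)$ with $L=F_0(\sqrt{a})$ and $\sigma:\sqrt{a}\mapsto-\sqrt{a}$. Since $\mathrm{char}\,F_0\neq 2$, the primitive second root of unity $-1$ lies in $F_0$ automatically, so the root-of-unity hypothesis is free. Before invoking the theorem I would verify that $L$ and $F=F_0(\sqrt{b})$ are linearly disjoint over $F_0$: otherwise $L=F$ and $L\otimes_{F_0}F\cong F\times F$ would embed in $D=(a,c)_{F_0}\otimes_{F_0}F$, contradicting the hypothesis that $D$ is a division algebra.

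By Theorem \ref{thm:biquats} it then suffices to show that $d\neq z\widetilde{\tau}(z)$ for every $z\in D$. I would write $z=z_0+z_1\sqrt{b}$ with $z_0,z_1\in(a,c)_{F_0}$ and $d=d_0+d_1\sqrt{b}$ with $d_0,d_1\in F_0$, noting that nonassociativity of $(F/F_0,\tau,d)$ forces $d\in F\setminus F_0$ and hence $d_1\neq 0$. Since $\sqrt{b}$ is central in $D$ and $\widetilde{\tau}(\sqrt{b})=-\sqrt{b}$, a short expansion gives
\[
z\widetilde{\tau}(z)=(z_0+z_1\sqrt{b})(z_0-z_1\sqrt{b})=(z_0^2-bz_1^2)+(z_1z_0-z_0z_1)\sqrt{b}.
\]
Equating the $\sqrt{b}$-coefficient against that of $d$ would force $z_1z_0-z_0z_1=d_1$.

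The crux of the argument is then a reduced-trace obstruction: the commutator $[z_1,z_0]\in(a,c)_{F_0}$ has reduced trace zero (from $\mathrm{Trd}(xy)=\mathrm{Trd}(yx)$), whereas the scalar $d_1\in F_0$ has reduced trace $2d_1$, which is nonzero since $\mathrm{char}\,F_0\neq 2$ and $d_1\neq 0$. This contradiction rules out the existence of $z$ with $z\widetilde{\tau}(z)=d$, so Theorem \ref{thm:biquats} delivers the conclusion.

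I do not anticipate a serious obstacle: the heart of the proof is the one-line commutator-versus-trace observation, and the main thing to be careful about is correctly packaging the ``remains a division algebra over $F$'' hypothesis as the linear disjointness of $L$ and $F$, and correctly extracting $d_1\neq 0$ from the nonassociativity of $(F/F_0,\tau,d)$.
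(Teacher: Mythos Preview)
Your proof is correct, and it reaches the conclusion by a genuinely different route from the paper's.

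The paper decomposes $z\in D$ along the quaternion basis $1,i,j,k$ of $(a,c)_{F_0}$ with coefficients $z_0,z_1,z_2,z_3\in F=F_0(\sqrt{b})$, expands $z\widetilde{\tau}(z)$ in that basis, and observes that its $1$-component equals
\[
N_{F/F_0}(z_0)+aN_{F/F_0}(z_1)+cN_{F/F_0}(z_2)-acN_{F/F_0}(z_3)\in F_0,
\]
which cannot be $d\in F\setminus F_0$. You instead decompose $z$ along the $F_0$-basis $\{1,\sqrt{b}\}$ of $F$ with coefficients $z_0,z_1\in(a,c)_{F_0}$, read off the $\sqrt{b}$-component $[z_1,z_0]$ of $z\widetilde{\tau}(z)$, and kill the equation $[z_1,z_0]=d_1\in F_0^\times$ by the reduced-trace identity $\mathrm{Trd}(xy)=\mathrm{Trd}(yx)$. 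Both arguments are short; yours avoids writing out the four quaternion components and replaces the explicit norm computation with a one-line structural fact about central simple algebras. Your preliminary remark that the hypothesis ``$(a,c)_{F_0}\otimes_{F_0}F$ is division'' forces $L=F_0(\sqrt{a})$ and $F=F_0(\sqrt{b})$ to be linearly disjoint is a point the paper leaves implicit, and it is worth making explicit as you do. One cosmetic note: for $m=2$ Theorem~\ref{thm:biquats} imposes no root-of-unity hypothesis at all, so you need not mention $-1\in F_0$.
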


\begin{proof}
Here, $K=F_0(\sqrt{a},\sqrt{b})$  with Galois group
 $G={\rm Gal}(K/F_0)=\{id, \sigma,\tau,\sigma\tau\}$, where
 $$\sigma(\sqrt{a})=-\sqrt{a},\quad \sigma(\sqrt{b})=\sigma(\sqrt{b}),$$
 $$\tau(\sqrt{a})=\sqrt{a},\quad \tau(\sqrt{b})=-\sqrt{b},$$
    $L=F_0(\sqrt{a})$ and $D=(a,c)_{F_0}\otimes F$.
For $z=z_0+iz_1+jz_2+kz_3\in D$, $z_i\in F_0(\sqrt{b})$, $i^2=a$, $j^2=c$, we get
$$z\widetilde{\tau}(z)=(z_0\tau(z_0)+az_1\tau(z_1)+cz_2\tau(z_2)-acz_3\tau(z_3))$$
$$+i(z_0\tau(z_1)+z_1\tau(z_0)-cz_2\tau(z_3)+cz_3\tau(z_2))$$
$$+j(z_0\tau(z_2)+z_2\tau(z_3)+az_1\tau(z_3)-az_3\tau(z_1))$$
$$+k(z_0\tau(z_3)+z_3\tau(z_2)+z_1\tau(z_2)-z_2\tau(z_1)).$$
Since $(F_0(\sqrt{b})/F_0,\tau,d)$ is nonassociative,  $d\in F_0(\sqrt{b})\setminus F_0$.
Hence if we assume that $d=z\widetilde{\tau}(z)$ for some $z\in D$ then
$$d= z_0\tau(z_0)+a\sigma(z_1)\tau(z_1)+c\sigma(z_2)\tau(z_2)-ac\sigma(z_3)\tau(z_3)$$
$$=N_{F/F_0}(z_0)+aN_{F/F_0}(z_1)+cN_{F/F_0}(z_2)-acN_{F/F_0}(z_3)\in F_0,$$
a contradiction. Thus, by Theorem \ref{thm:biquats},
 the tensor product
$$(a,c)_{F_0}\otimes_{F_0} (F_0(\sqrt{b})/F_0,\tau,d)$$
 is a division algebra.
\end{proof}

\begin{theorem}
 \label{3xquat}
 Let $F_0$ be of characteristic not 2, $F=F_0(\sqrt{b})$.
 Let $D_0=(L/F_0,\sigma,c)$ be a cyclic algebra over $F_0$ of degree $3$
 and $(F_0(\sqrt{b})/F_0,\tau,d)$ a nonassociative quaternion algebra. Let $d=d_0+\sqrt{b}d_1\in F\setminus F_0$
with $d_0,d_1\in F_0$.
\\ (i) If  $3d_0^2+bd_1^2\not=0$, then
$$(L/F_0,\sigma,c)\otimes_{F_0} (F_0(\sqrt{b})/F_0,\tau,d)$$
 is a division algebra over $F_0$.
 \\ (ii) Let $F_0=\mathbb{Q}$. If $b>0$, or if $b<0$ and $-\frac{b}{3}\not\in\mathbb{Q}^{\times 2}$
  then
$$(L/F_0,\sigma,c)\otimes_{F_0} (F_0(\sqrt{b})/F_0,\tau,d)$$
 is a division algebra over $F_0$.
 \end{theorem}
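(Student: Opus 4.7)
The plan is to deduce both parts from Theorem \ref{thm:classicalnew}(b)(i) applied with $m=n=3$. Since $m=3$, no primitive $m$th root of unity assumption is needed, so it suffices to verify that $\tau(d^3)\neq d^3$ in order to conclude that $(L/F_0,\sigma,c)\otimes_{F_0}(F_0(\sqrt{b})/F_0,\tau,d)$ is a division algebra. Because $d\in F\setminus F_0$ is exactly the condition that the nonassociative cyclic algebra $(F_0(\sqrt{b})/F_0,\tau,d)$ is well-defined as such, we will always have $d_1\neq 0$.

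First I would expand $d^3$ explicitly from $d=d_0+\sqrt{b}d_1$ by computing $d^2=d_0^2+bd_1^2+2\sqrt{b}d_0d_1$ and multiplying through. Separating rational and $\sqrt{b}$-parts gives
\[
d^3 = \bigl(d_0^3+3bd_0d_1^2\bigr) + \sqrt{b}\,d_1\bigl(3d_0^2+bd_1^2\bigr).
\]
Applying $\tau$ only changes the sign of the $\sqrt{b}$-coefficient, so $\tau(d^3)=d^3$ if and only if $d_1(3d_0^2+bd_1^2)=0$. Since $d_1\neq 0$, this reduces to $3d_0^2+bd_1^2=0$. Hence whenever $3d_0^2+bd_1^2\neq 0$ the hypothesis of Theorem \ref{thm:classicalnew}(b)(i) is satisfied, establishing part (i).

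For part (ii) I would argue that the conditions on $b$ force $3d_0^2+bd_1^2\neq 0$, thereby reducing to part (i). If $b>0$, then $3d_0^2+bd_1^2>0$ because $d_1\neq 0$. If $b<0$, an equality $3d_0^2+bd_1^2=0$ together with $d_1\neq 0$ would give $-b/3=(d_0/d_1)^2\in\mathbb{Q}^{\times 2}$, contradicting the hypothesis (the case $d_0=0$ is excluded since it would force $bd_1^2=0$, impossible as $b,d_1\neq 0$). In either case $3d_0^2+bd_1^2\neq 0$, and part (i) applies.

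There is essentially no obstacle beyond the algebraic manipulation of $d^3$; the substantive content is already packaged inside Theorem \ref{thm:classicalnew}(b)(i), and what remains is routine arithmetic in $F_0(\sqrt{b})$ together with an elementary sign/squareness analysis for part (ii).
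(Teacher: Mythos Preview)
Your argument is essentially identical to the paper's proof. One small correction: you write ``applied with $m=n=3$'', but in the notation of Theorem~\ref{thm:classicalnew} the nonassociative quaternion algebra $(F_0(\sqrt{b})/F_0,\tau,d)$ has degree $m=2$, while $D_0$ has degree $n=3$; the relevant condition $\tau(d^n)\neq d^n$ is indeed $\tau(d^3)\neq d^3$, and no root-of-unity hypothesis is needed because $m=2$ (not because $m=3$). With this relabeling your computation of $d^3$ and the reduction for part~(ii) match the paper's proof exactly.
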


\begin{proof} Here, $F=F_0(\sqrt{b})$ and $K=F_0(\sqrt{b})$.
\\ (i) We know
$d^3=d_0^3+3bd_0d_1^2+\sqrt{b}d_1(3d_0^2+bd_1^2),$
so if we want that
$d^3\not=\widetilde{\tau}(d^3)$, this is equivalent to
 $3d_0^2+bd_1^2\not=0$. The assertion follows from Theorem \ref{thm:classicalnew} (b).
 \\ (ii) is a direct consequence from (i): for $F_0=\mathbb{Q}$, $3d_0^2+bd_1^2>0$ for all $b>0$. For $b<0$,
 the assertion is true since
 $3d_0^2+bd_1^2=0$ if and only if
 $(\frac{d_0}{d_1})^2=-\frac{b}{3}$.

\end{proof}

\begin{remark}
 For $A=(L/F_0,\sigma,c)\otimes_{F_0} (F/F_0,\tau,d)$, the map  $M_{A}(x)=\det(L_x)=\det (\lambda(M(x)))$
 can be seen as a generalization
 of the norm of an associative central simple algebra, since $M_{A}=N_{A/F}$ if both cyclic algebras in the
 tensor product $A$ are associative.

 For all $X=\lambda(M(x)) =\lambda(x)\in
\lambda(A)\subset {\rm Mat}_{nm}(K),$  $D=D_0\otimes_{F_0} F$, we have
 ${\rm det}\, X\in F$
 (cf. \cite{PS13} or
 \cite[Corollary 2]{P13} for $m=2$). Thus  $M_{A}:A\to F$.
 We also have
 $$M_A(x)=N_{D/F}(x)\tau(N_{D/F}(x))\cdots\tau(N_{D/F}(x))=N_{F/F_0}(N_{D/F}(x))$$
 for all $x\in (K/F,\sigma,c)$ (which is easy to see from applying the determinant to the matrix of $L_x$ in
  Equation (4) for some $x\in D$).
\end{remark}

We conclude with the observation that the generalization of Albert and Jacobson's condition is a necessary condition for $d$ in the general case:

\begin{proposition}
Let $D_0=(L/F_0,\sigma,c)$ be a an associative cyclic  algebra of degree $n$ over $F_0$,
such that $D=D_0\otimes_{F_0}F$ is a division algebra.
 If $D_0\otimes_{F_0} (F/F_0,\tau,d)$ is a division algebra then
 $$d\not=z\widetilde{\tau}(z)\cdots \widetilde{\tau}^{m-1}(z)$$
 for all $z\in D$.
\end{proposition}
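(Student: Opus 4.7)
The plan is to prove the contrapositive: assuming $d = z\widetilde{\tau}(z)\cdots\widetilde{\tau}^{m-1}(z)$ for some $z \in D$, I will show that $A := D_0 \otimes_{F_0}(F/F_0,\tau,d)$ is not a division algebra. The first step is to combine Theorem \ref{main}(i) with Theorem \ref{thm:skew} to identify $A \cong S_f$, where $f(t) = t^m - d \in R := D[t;\widetilde{\tau}^{-1}]$. By Corollary \ref{cor:nec}(ii), $A \cong S_f$ is a division algebra if and only if $f$ is irreducible in $R$, so it suffices to exhibit a proper right factor of $f$.

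To do so, I would apply the right division algorithm in $R = D[t;\sigma]$ with $\sigma := \widetilde{\tau}^{-1}$. A short computation (cf.\ \cite[1.3.11]{J96}, exactly as used in the proof of Theorem \ref{thm:betterpetit}) shows that $t-b$ right-divides $t^m - d$ if and only if
$$\sigma^{m-1}(b)\,\sigma^{m-2}(b)\cdots\sigma(b)\,b = d,$$
which, since $\sigma = \widetilde{\tau}^{-1}$, is equivalent to
$$\widetilde{\tau}^{-(m-1)}(b)\,\widetilde{\tau}^{-(m-2)}(b)\cdots\widetilde{\tau}^{-1}(b)\,b = d.$$

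The crux is choosing $b$ correctly: because the automorphism appearing in $R$ is $\widetilde{\tau}^{-1}$ rather than $\widetilde{\tau}$, the right candidate is $b := \widetilde{\tau}^{m-1}(z)$ rather than $z$ itself. With this choice, $\widetilde{\tau}^{-j}(b) = \widetilde{\tau}^{m-1-j}(z)$ for $0 \le j \le m-1$, so the product on the left-hand side telescopes to $z\,\widetilde{\tau}(z)\cdots\widetilde{\tau}^{m-1}(z)$, which equals $d$ by hypothesis. Hence $t - b$ is a proper right factor of $f$, so $f$ is reducible in $R$ and therefore $A \cong S_f$ is not a division algebra, contrary to the assumption.

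There is essentially no hard step: the proposition is a direct consequence of the identification $A \cong S_f$ (Theorems \ref{main} and \ref{thm:skew}) together with the elementary factorization criterion for $t^m - d$ obtained from the right division algorithm and Corollary \ref{cor:nec}(ii). The only point that demands attention is the index shift $z \mapsto \widetilde{\tau}^{m-1}(z)$ forced by the switch between $\widetilde{\tau}$ and $\widetilde{\tau}^{-1}$ when one passes from the tensor-product side to the twisted-polynomial side.
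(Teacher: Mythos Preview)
Your argument is correct. The paper states this proposition without proof, treating it as an immediate observation from the preceding results; your write-up supplies precisely the details the paper leaves implicit, namely the identification $A\cong S_f$ via Theorems~\ref{main} and~\ref{thm:skew} (equivalently Theorem~\ref{thm:classicalnew0}), the equivalence with irreducibility of $f(t)=t^m-d$ from Corollary~\ref{cor:nec}(ii), and the linear right factor $t-\widetilde{\tau}^{m-1}(z)$ witnessing reducibility when $d=z\widetilde{\tau}(z)\cdots\widetilde{\tau}^{m-1}(z)$. The index shift you flag is exactly the right point to highlight, and your computation of the right-division remainder matches the pattern used in the proof of Theorem~\ref{thm:betterpetit}.
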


\section{Applications to space-time block coding}

Space-time block codes  are used for reliable high rate transmission
over wireless digital channels with multiple antennas transmitting and receiving the data.
 For instance, four transmit and two receive antennas can be used in
digital video broadcasting for portable TV devices, or for transmitting data to mobile phones.

A \emph{space-time block code} (STBC) is a set $\mathcal{C}$ of complex $n\times m$ matrices (the \emph{codebook}),
 that satisfies a number of properties which determine how well the code performs.
Each column is transmitted from $n$ transmit antennas simultaneously, after $n$ columns are transmitted,
the receive antennas process the data and try to recover them.

We consider \emph{linear} codes: if $X,X'\in \mathcal{C}$ then $X\pm X'\in \mathcal{C}$.
There are some basic design criteria which are desirable for good performance of a linear code:
 The code should be {\it fully diverse}, i.e. ${\rm det}X\not=0$ for all $0\not=X\in \mathcal{C}$;
and for all $X\in \mathcal{C}$, $|{\rm det}X|^2$ should be as large as possible to minimize
 the pairwise error probability.
The linear code has {\it non-vanishing determinant} (NVD) if
${\rm min}_{\{X\in\,\mathcal{C}\}}|{\rm det}X|^2$ is bounded away from 0.

Central simple (associative) division algebras, in particular cyclic division algebras $A=(K/F,\sigma, c)$ of degree $m$ with $K$
an imaginary number field, have been
 highly successfully used to systematically build STBCs, using the fact that their left regular representation
 $$\lambda: A \hookrightarrow {\rm End}_K(A)\subset {\rm Mat}_m(K),$$
 $$ x \mapsto L_x\mapsto X$$
 over a maximal subfield $K$
 yields a fully diverse linear STBC $\mathcal{C}=\lambda(A)$.

 The idea to take the matrix representing  left
 multiplication (over a suitable subfield of the algebra) to obtain a fully diverse STBC can be extended to nonassociative division algebras,
 provided they have a large enough nucleus, as first observed in \cite{PU11}.

The previous results establish a general framework and some new simplified conditions for constructing fully diverse STBCs
using the left multiplication in
tensor products of any associative cyclic  algebra  over $F_0$ of degree $n$ and a
nonassociative cyclic algebra of degree $m$. We thus have the potential to systematically construct
STBCs for given numbers of transmit/receive antennas which are not just fully diverse but fast-decodable, provided we start with a suitable
cyclic algebra (whose left regular representation yields a fast-decodable code), as discussed in
\cite[Section 6.1]{PS14}.

 We  use the $nm^2$ degrees of freedom of the
channel to transmit $nm^2$ complex information symbols per \emph{codeword} $X\in \mathcal{C}$.
If $mn$ channels are
used, our space-time block code $\mathcal{C}$ consisting of $mn\times mn$ matrices $X$ of the form
 (\ref{eq:nbynass}) with entries in $K$ therefore
 has a rate of $m$ complex symbols per channel use, which is maximal for $m$ receive antennas.
 If the associated tensor product algebra is division (which can be checked
 using Theorem \ref{main} and the resulting easier conditions for special cases), the code is fully diverse.
This generalizes the set-up discussed in \cite[Section V.A]{MO13} which only covers $n=3$, $m=2$.

 Theorem  \ref{main}
implies also that the fast-decodable iterated codes for 6 transmit and 3 receive antennas
constructed seemingly ad hoc in  \cite[Section V.A]{MO13} consist of the $6\times6$-matrices representing left
multiplication in the tensor product of a degree three cyclic division algebra $D$ and a nonassociative quaternion algebra.

The division algebra
$$A=(-1,-1)_\mathbb{Q}\otimes_\mathbb{Q} (\mathbb{Q}(\theta)/\mathbb{Q},\tau,\theta )$$
 is behind the fast-decodable fully diverse code designed in \cite[Section 6.2]{PS14}, where
 $\omega=\frac{-1+\sqrt{3}i}{2}$ is a primitive third root of unity,
 $\zeta_7$ is a primitive $7^{th}$ root of
unity, $\theta = \zeta_7 + \zeta_7^{-1} = 2
\cos(\frac{2 \pi}{7})$,  and  $\mathbb{Q}(\omega, \theta)/\mathbb{Q}(\omega)$ is a cubic cyclic field extension
whose Galois group is generated by the automorphism $\tau: \zeta_7 +\zeta_7^{-1} \mapsto \zeta_7^2 + \zeta_7^{-2}$.
For $m=3$ and $n=2$ we can now systematically build other fast-decodable fully diverse iterated codes of maximal rate
 for 6 transmit and 3 receive antennas out of tensor products if desired.

Using Theorem \ref{biquat}, we can construct a new family of fast-decodable $4\times 2$-codes starting with the Silver code.
These codes have the same  decoding complexity as the SR-code \cite[IV.B]{R13}
 and as the  code presented in \cite{LHHC}, which is $O(M^{4.5})$. Three
 other state-of the-art `ad hoc'  constructions of $4\times 2$-codes of the same decoding complexity are given in
  \cite[Table I]{LHHC}.
  To our knowledge  there are no fast-decodable fully diverse $4\times 2$ codes known
  for 4x2 multiple-input multiple-output transmission with a better decoding complexity than $O(M^{4.5})$:

\begin{example}
Let $F=\mathbb{Q}(\sqrt{-7})$, $K=\mathbb{Q}(i,\sqrt{-7})$ and
$L=\mathbb{Q}(i)$. $D=(-1,-1)_F$ is the quaternion division algebra used in the Silver code,
and for $a,b\in\mathbb{Q}(\sqrt{-7})$, $\sigma(a+ib)=a-ib$.
 The entries of the matrices of the Silver code are elements of the order
$\mathcal{O}_K\oplus j\mathcal{O}_K$ in $D=(-1,-1)_{\mathbb{Q}(\sqrt{-7})}$, where
$\mathcal{O}_K=\mathbb{Z}[i]\oplus\mathbb{Z}[i](\frac{1+\sqrt{-7}}{2})$ is the ring of integers of $K$.

By Theorem \ref{biquat},  $A=(-1,-1)_\mathbb{Q}\otimes_\mathbb{Q} (\mathbb{Q}(\sqrt{-7})/\mathbb{Q},\tau,d)$
 is a division
algebra for all choices of $d\in \mathbb{Q}(\sqrt{-7})\setminus\mathbb{Q}$. Therefore the code $\mathcal{C}$ given by
the matrix
\[
 \left [\begin {array}{cccc}
x_0 &  -\sigma(x_1)  &  d\tau(y_0)  & -d\tau(\sigma(y_1))\\
x_1 & \sigma(x_0)    & d\tau(y_1)   & d \tau(\sigma(y_0))\\
y_0 & - \sigma(y_1)  &  \tau(x_0)  &  -\tau(\sigma( x_1))\\
y_1 & \sigma(y_0)    &  \tau(x_1)  &  \tau(\sigma(x_0))\\
\end {array}\right ],
 \]
representing left multiplication in $A$, $x_i,y_i\in \mathbb{Q}(\sqrt{-7})$,
 is fully diverse and has NVD since $\det X\in \mathbb{Q}(\sqrt{-7})$ \cite{PS13}, for all choices of
$d\in \mathbb{Q}(\sqrt{-7})\setminus\mathbb{Q}$. Therefore its minimum determinant is at least 1.
This rough estimate
 puts our code family already among the top three places in \cite[Table I]{LHHC}, along with
 the code proposed in \cite{LHHC} and the SR-code, which compares the
fast-decodable codes with best minimum determinant and decoding complexity. This is because
$\mathcal{C}$   has decoding complexity at most
$$\mathcal{O}(M^{2m^2-3m/2})=\mathcal{O}(M^{5})$$
no matter the choice of $d\in \mathbb{Q}(\sqrt{-7})\setminus\mathbb{Q}$,
if the matrix entries take values from $M$-QAM $\subset\mathbb{Z}[i]$, thus is fast-decodable of maximal rate 2 by
\cite[Section 6.1]{PS14}.
In particular, it has optimal diversity-multiplexing gain trade-off (DMT).
The usual hard-limiting as done for the SR-code then lowers its decoding complexity to $\mathcal{O}(M^{4.5})$.

 As a further comparison, the iterated Silver code in \cite[Section IV.A]{MO13}, which does not arise from a tensor product (since
 there $\sigma=\tau$), initially has decoding complexity at most
$\mathcal{O}(M^{13})$ which is then further reduced to $\mathcal{O}(M^{10})$ by scaling.
\end{example}


\end{document}